\newtheorem{thm}{Theorem}[section]
\newtheorem{lem}[thm]{Lemma}
\newtheorem{problem}[thm]{Problem}
\numberwithin{equation}{section}
\makeatletter \@addtoreset{equation}{section} \makeatother
\newcommand{\tg}{\tilde{g}}
\newcommand{\tgamma}{\tilde{\gamma}}
\newcommand{\rmand}{\quad\hbox{ and }\quad}
\begin{document}

\begin{center}
{\Large\bf Surface embedding of $(n,k)$-extendable graphs}
\end{center}

\begin{center}
Hongliang Lu$^{1}$ and David G.L. Wang$^{2}$\\[6pt]

$^{1}$School of Mathematics and Statistics\\
Xi'an Jiaotong university, 710049 Xi'an, P. R. China\\
$^{2}$Department of Mathematics\\
University of Haifa, 3498838 Haifa, Israel

{\tt $^{1}$luhongliang@mail.xjtu.edu.cn}$\quad $ {\tt
$^{2}$wgl@math.haifa.ac.il}
\end{center}

\begin{abstract}
This paper is concerned with the surface embedding of matching extendable graphs. 
There are two directions extending the theory of perfect matchings, 
that is, matching extendability and factor-criticality. 
In solving a problem posed by Plummer,
Dean 
(￼The matching extendability of surfaces, J. Combin. Theory Ser. B 54 (1992), 133--141) 
established the fascinating formula
for the minimum number $k=\mu(\Sigma)$ such that every $\Sigma$-embeddable graph
is not $k$-extendable. Su and Zhang, Plummer and Zha 
found the minimum number $n=\rho(\Sigma)$ such that every
$\Sigma$-embeddable graph is not $n$-factor-critical. Based on the notion of $(n,k)$-graphs
which associates these two parameters, we found the formula for the minimum number
$k=\mu(n,\Sigma)$ such that every $\Sigma$-embeddable graph is not an $(n,k)$-graph.
To access this two-parameter-problem, 
we consider its dual problem and find out $\mu(n,\Sigma)$ conversely.
The same approach works for rediscovering the formula of the number~$\rho(\Sigma)$.
\end{abstract}

\noindent\textbf{Keywords:} 
Euler contribution;
factor-criticality;
matching extension;
surface embedding

\noindent\textbf{AMS Classification:} 05C10 05C70


\section{Introduction}\label{sec:intro}

Since the notion of 
graph matching extendability was introduced by Plummer~\cite{Plu80} in~1980,
graph theory problems connecting the matching extendability 
have been studied quite extensively.
We recommend~\cite{Plu08BC,YL09B} for recent progress,
and the references therein for an extensive survey.
Among those beautiful structural theorems,
much attention has been paid to the interplay 
between the extendability and genus of the same graph.
For basic notions on topological graph theory, 
especially on graph embeddings,
we refer the reader to Gross and Tucker's book~\cite{GT87B}.
See also~\cite{BW09B} for a recent collection of topics in topological graph theory.

It was Nishizeki~\cite{Nis79} who initiated the study of
matching problems in relation with the genera of graphs.
He gave a lower bound of the cardinality of the maximum matchings
of a graph in terms of five basic parameters including the genus.
In the late 1980s, 
interests in matching extendability versus surface embeddings began 
with the charming result that no planar graph is $3$-extendable; see~\cite{Plu88PB}.
Considering both orientable and non-orientable surfaces,
Plummer~\cite{Plu88} studied the problem of determining 
the minimum integer $k$ 
such that every $\Sigma$-embeddable graph is not $k$-extendable. 
In~1992, Dean~\cite{Dean92} found the complete answer to this problem
based on some partial results obtained by Plummer.

\begin{thm}[Dean~\cite{Dean92}, Plummer~\cite{Plu88}]\label{thm:k}
Let $\Sigma$ be a surface of characteristic~$\chi$. 
Let $\mu(\Sigma)$ to be the minimum integer $k$ 
such that every $\Sigma$-embeddable graph is not $k$-extendable. Then
\begin{equation}\label{ans:mu}
\mu(\Sigma)=\begin{cases}
3,&\text{if $\Sigma$ is homeomorphic to the sphere};\\[3pt]
2+\lfloor \sqrt{4-2\chi(\Sigma)}\rfloor,&\text{otherwise}.
\end{cases}
\end{equation}
\end{thm}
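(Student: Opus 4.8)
\emph{Proof plan.} Write $m$ for the integer on the right-hand side of~\eqref{ans:mu}; we must show $\mu(\Sigma)=m$, which amounts to two claims: (i) no $\Sigma$-embeddable graph is $m$-extendable, and (ii) some $\Sigma$-embeddable graph is $(m-1)$-extendable. I would first dispatch~(ii), the softer half, for $\Sigma$ other than the sphere. The candidate is the complete bipartite graph $K_{m,m}$, which is $(m-1)$-extendable because any matching of size $m-1$ omits exactly one vertex of each part and those two vertices are adjacent. By Ringel's formulas (with the known small exceptions) the orientable genus of $K_{m,m}$ equals $\lceil(m-2)^2/4\rceil$ and its nonorientable genus equals $\lceil(m-2)^2/2\rceil$, while unravelling $m=2+\lfloor\sqrt{4-2\chi(\Sigma)}\,\rfloor$ yields exactly $(m-2)^2\le 4-2\chi(\Sigma)$; this says precisely that the relevant (orientable or nonorientable) genus of $K_{m,m}$ is at most that of $\Sigma$, so $K_{m,m}$ embeds in $\Sigma$. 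For the sphere, where $K_{3,3}$ is nonplanar, one uses instead the $3$-cube $Q_3$, which is planar and $2$-extendable; the finitely many small surfaces on which Ringel's formula has an exception are checked individually.

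For~(i) the tool is the Euler contribution. If $G$ is $m$-extendable and embeds in $\Sigma$, then, passing to a minimum Euler genus (hence cellular) embedding, $G$ is cellularly embedded in a surface $\Sigma'$ with $\chi(\Sigma')\ge\chi(\Sigma)$, and $\sum_{v}\Phi(v)=\chi(\Sigma')$, where $\Phi(v)=1-\tfrac{1}{2}\deg(v)+\sum_{f}1/|f|$ and the inner sum runs over the face corners at $v$. Since an $m$-extendable graph is $(m+1)$-connected, $\delta(G)\ge m+1$, and it has at least $2m+2$ vertices. Were $G$ triangle-free --- as the extremal complete bipartite graphs are --- each face corner would contribute at most $1/4$, giving $\Phi(v)\le 1-\tfrac{1}{4}\deg(v)\le(3-m)/4$ for every $v$, and hence, using $|V(G)|\ge 2m+2$, $\chi(\Sigma')\le(2m+2)(3-m)/4=-(m^2-2m-3)/2$. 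But $m=2+\lfloor\sqrt{4-2\chi(\Sigma)}\,\rfloor$ forces $4-2\chi(\Sigma)<(m-1)^2$, i.e.\ $\chi(\Sigma)>-(m^2-2m-3)/2\ge\chi(\Sigma')$, contradicting $\chi(\Sigma')\ge\chi(\Sigma)$.

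The genuinely hard part is removing the triangle-freeness: a vertex lying on $t$ triangular faces only satisfies $\Phi(v)\le 1-\tfrac{1}{4}\deg(v)+t/12$, so the crude count no longer beats $\chi(\Sigma)$. The crux is a local lemma to the effect that in an $m$-extendable graph a vertex on many triangular faces has proportionally larger degree --- morally $\deg(v)\ge m+1+t/3$ --- a bound tight for the complete graphs that triangulate their minimum genus surface. I expect it to come straight from the matching-extension definition: if a low-degree vertex $v$ has a heavily triangulated neighbourhood, then the edges of $G[N(v)]$ forced by those faces, together with a few further edges, form a matching of size at most $m$ that saturates all of $N(v)$, whence no perfect matching can saturate $v$; this is, for instance, exactly why a $2$-extendable graph has no degree-$3$ vertex on a triangle. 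Granting the lemma, a discharging argument finishes: low-degree vertices carry no triangular faces and contribute at most $(3-m)/4$ as before, while vertices on triangular faces have enough surplus degree to absorb the extra $t/12$, so the contributions again sum to at most $-(m^2-2m-3)/2<\chi(\Sigma)$. Calibrating the discharging to be simultaneously tight against $K_{m+1,m+1}$ and against the complete triangulating graphs, handling the borderline cases $m=3$ (the projective plane, and the sphere, where~(i) reads ``no planar graph is $3$-extendable''), and clearing the sporadic small surfaces, is where the technical weight lies.
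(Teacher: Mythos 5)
The paper does not prove Theorem~\ref{thm:k} at all: it is quoted from Dean and Plummer, and the authors only prove the $(n,k)$-analogues (Theorems~\ref{thm:n>=6:k=0}, \ref{thm:g}, \ref{thm:g=0:tg=1}, \ref{thm:g=1}). So the fair comparison is with Dean's published argument and with the paper's template for the generalization, and against that template your plan is structurally the right one: sharpness via the complete bipartite graphs $K_{m,m}$ together with Ringel's genus formulas (this is where the $k$-extendable case genuinely differs from the $(n,k)$-case with $n\ge1$, where bipartite examples are excluded by Lemma~\ref{lem:nk:bip} and one must use $K_{n+2k+2}$ instead), and non-existence via Euler contributions, a control point, and a lower bound on $d(v)$ in terms of the number $x$ of triangular faces at $v$. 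Your arithmetic checks out: $(m-2)^2\le 4-2\chi$ is exactly the embeddability condition for $K_{m,m}$, and $4-2\chi<(m-1)^2$ is exactly what makes the Euler-contribution count contradictory; also $m\ge3$ for every surface, so the sign of $(3-m)/4$ causes no trouble.

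The genuine gap is the local lemma you only state ``morally'' as $d(v)\ge m+1+t/3$. This is the entire technical content of the non-existence half, and your one-sentence justification (``the edges forced by those faces, together with a few further edges, form a matching of size at most $m$ that saturates all of $N(v)$'') is not yet a proof: one must (a) show $N(v)$ actually contains a large enough matching, which requires a separate argument in the degenerate case where every face at $v$ is a triangle, so that $G[N(v)]$ is a Hamilton cycle on $d(v)$ vertices and the forced matching must be completed using an edge leaving $N(v)$; and (b) split into the two regimes $x\le 2k-2$ and $x\ge 2k-1$, since the correct bound is $d(v)\ge k+1+\lceil x/2\rceil$ in the first case and $d(v)\ge 2k+1$ in the second --- this is precisely the $n=0$ instance of the paper's Lemma~\ref{lem:d:x}, whose proof shows what your sketch omits. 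Your weaker slope-$1/3$ form does follow from that two-case bound (using $x\le d(v)$ and $k\ge1$) and does suffice to force $\Phi(v)\le(3-m)/4$ for every vertex, so no discharging is actually needed once the lemma is in hand; but as written the lemma is asserted, not proved, and without it the argument does not close. A second, minor, loose end: the step $\chi(\Sigma')\ge\chi(\Sigma)$ after passing to a cellular embedding deserves a word (or can be avoided entirely by summing Euler contributions in the given, possibly non-cellular, embedding of $\Sigma$, where $|V|-|E|+|F|\ge\chi(\Sigma)$ still holds, as the paper implicitly does via Lemma~\ref{lem:ctrl}).
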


Their proofs made a heavy use of the theory of Euler contributions,
which was due to Lebesgue~\cite{Leb40} in 1940,
and further developed by Ore~\cite{Ore67} in 1967, 
and by Ore and Plummer~\cite{OP69} in 1969.

Besides graph extendability which keeps the parity of the order of graphs to be even, 
factor-criticality is yet another direction that extends the theory of perfect matchings
which allows a graph to have an odd number of vertices.
Su and Zhang~\cite{SZ} considered the analogue problem 
relating the factor-criticality with surface embedding.
They found the answer for all surfaces except the Klein bottle,
which was settled by Plummer and Zha~\cite{PZ04}.

\begin{thm}[Su--Zhang~\cite{SZ}, Plummer--Zha~\cite{PZ04}]\label{thm:fc}
Let $\Sigma$ be a surface of characteristic~$\chi$.
Let~$\rho(\Sigma)$ be the minimum integer~$n$ 
such that every $\Sigma$-embeddable graph is not $n$-factor-critical.
Then we have 
\begin{equation}\label{ans:fc}
\rho(\Sigma)=\begin{cases}
5,&\text{if $\Sigma$ is homeomorphic to the sphere};\\
\lfloor(5+\sqrt{49-24\chi}\,)/2\rfloor,&\text{otherwise}.
\end{cases}
\end{equation}
\end{thm}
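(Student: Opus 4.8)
The plan is to prove Theorem~\ref{thm:fc} by the same Euler-contribution machinery that underlies Theorem~\ref{thm:k}, working with the dual formulation announced in the abstract. Recall that a graph $G$ on at least $n+1$ vertices is $n$-factor-critical if $G-S$ has a perfect matching for every $S\subseteq V(G)$ with $|S|=n$; equivalently, by a theorem of Favaron and Yu, $G$ is $n$-factor-critical if and only if for every such $S$ the graph $G-S$ has no nontrivial barrier, which by Tutte--Berge amounts to $o(G-S-T)\le|T|$ for all $T$, with equality forcing restrictions. So $G$ \emph{fails} to be $n$-factor-critical exactly when there is a set $A$ (playing the role of $S\cup T$) whose removal leaves at least $|A|-n+2$ odd components (with the right parity). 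The first step is therefore to reformulate ``$G$ is not $n$-factor-critical'' as the existence of such a separating set $A$ with many components, and to reduce, by a standard minimality/contraction argument, to the case where $G$ is edge-maximal with this property and each component of $G-A$ is contracted to a single vertex, so that $G$ becomes a near-complete bipartite-like configuration joined to $A$.

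Second, I would embed this extremal configuration in $\Sigma$ and apply the theory of Euler contributions: assign to each vertex $v$ the Lebesgue--Ore charge $\Phi(v)=1-\deg(v)/2+\sum_{f\ni v}1/|f|$, so that $\sum_v\Phi(v)=\chi(\Sigma)$. The point of the extremal reduction is that the $|A|-n+2$ contracted odd components are vertices of small degree (degree at least roughly $n+1$ is forced, but not much more in the extremal case), each sitting inside $A$, and this pins down the local structure (triangulated neighbourhoods, bounded face sizes) tightly enough that one can bound $\Phi$ from below on these vertices and on $A$. Summing the charges and using $\sum\Phi=\chi$ then yields an inequality relating $n$, $|A|$, and $\chi$; optimizing over the admissible $|A|$ (the extremal value is $|A|\approx n+\sqrt{\cdots}$, reflecting the quadratic under the radical) produces the bound $n\le\lfloor(5+\sqrt{49-24\chi})/2\rfloor$, i.e.\ $\rho(\Sigma)\le$ that value.

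Third, for the matching lower bound I would exhibit, for each surface $\Sigma$, an explicit $\Sigma$-embeddable graph that \emph{is} $(\rho(\Sigma)-1)$-factor-critical. The natural candidates are complete graphs $K_m$ or complete tripartite graphs $K_{a,b,c}$ of the largest order that still embeds in $\Sigma$ (governed by the Ringel--Youngs / Euler bound $m\le\lfloor(7+\sqrt{49-24\chi})/2\rfloor$), together with the classical fact that $K_m$ is $(m-2)$-factor-critical and the analogous statement for balanced complete multipartite graphs; a short case analysis on $\chi\bmod$ small numbers, plus the sporadic exceptions (the sphere, where one checks $K_6$ by hand, and the Klein bottle, where the generic construction fails and one borrows Plummer--Zha's ad hoc embedding), completes the matching lower bound. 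Combining the two bounds gives \eqref{ans:fc}.

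The main obstacle I expect is the extremal-structure step: controlling precisely how the contracted odd components and the set $A$ interact in the embedding, and showing the degrees and face sizes are as small as the charge count needs — in particular handling components that are \emph{not} single vertices before contraction, and making sure contraction does not destroy the embedding or the factor-criticality witness. The Klein bottle will again be an outlier, since there the generic optimum is off by one and must be treated separately, exactly as in Theorem~\ref{thm:fc} as stated; I would isolate that case from the start rather than try to force it into the uniform argument.
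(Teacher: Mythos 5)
There is a genuine gap, and it sits in your upper-bound argument: the direction of the implication is inverted. To prove $\rho(\Sigma)\le B$ you must show that \emph{no} $\Sigma$-embeddable graph is $B$-factor-critical, i.e.\ you assume a $\Sigma$-embeddable $B$-factor-critical graph $G$ exists and derive a contradiction. Your plan instead starts from a graph that already \emph{fails} to be $n$-factor-critical, extracts the Tutte--Berge witness $A$ with $o(G-A)\ge|A|-n+2$, passes to an edge-maximal configuration with the contracted odd components, and discharges on that. But the existence of $A$ is exactly the conclusion you need for every embeddable graph; you cannot obtain it by ``reducing to the extremal configuration with this property.'' Moreover, the degree bound you lean on (``degree at least roughly $n+1$ is forced'') is a consequence of \emph{being} $n$-factor-critical (via $\delta(G)\ge\kappa(G)\ge n+1$), not of failing to be, so it is unavailable for the contracted vertices in your failure witness. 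The paper's actual upper bound needs none of this machinery: assuming $G$ is $n$-factor-critical and embedded on a too-small surface, it takes a single control point $v$ with $\Phi(v)\ge\chi(\Sigma)/|G|$, applies Lemma~\ref{lem:ctrl} with $d(v)\ge n+1$, observes that $|G|=n+2$ would force $G=K_{n+2}$ (whose genus is known by Ringel--Youngs), hence $|G|\ge n+4$, and solves the resulting inequality for a contradiction (see the proof of Theorem~\ref{thm:n>=6:k=0}). No barrier structure, no contraction, no global discharging is required, and the difficulties you flag about components that are not single vertices and about contraction destroying the embedding simply do not arise.

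Your lower-bound step is essentially the paper's: $K_{n+2}$ realizes the generic bound via Theorem~\ref{thm:g:K}, the sphere is handled by $5$-connected planar graphs of even order (Lemma~\ref{thm:5conn:4fc} plus Zaks' construction) rather than by $K_6$ alone, and the Klein bottle is indeed the outlier requiring the Plummer--Zha $N_2$-embeddable $5$-factor-critical graph because $\tg(K_7)=3$. One further structural remark: the paper does not prove Theorem~\ref{thm:fc} directly but derives it by computing $g(n,0)$ and $\tg(n,0)$ (Theorems~\ref{thm:n>=6:k=0}, \ref{thm:g=0:tg=1}, \ref{thm:g=1}) and then inverting the ceiling inequalities with Lemma~\ref{lem:ineq:ceil}; if you repair the upper bound as above, your outline would reproduce that route, but as written the core step would not go through.
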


Citing the unpublished manuscript~\cite{SZ} for Theorem~\ref{thm:fc},
as it was done in~\cite{PZ04} and~\cite[Theorem 6.3.19]{YL09B},
we will give a proof to Theorem~\ref{thm:fc} in the appendix.

A more generalized notion, $(n,k)$-graphs, has been defined by Liu and Yu~\cite{LY01}, 
which associates the extendability parameter~$k$ in Theorem~\ref{thm:k}
and the factor-criticality parameter~$n$ in Theorem~\ref{thm:fc}.
A graph~$G$ is said to be an {\em $(n,k)$-graph} if for any vertex subset~$S$ of~$n$ vertices,
the subgraph $G-S$ is $k$-extendable.
Our main result is the following theorem.

\begin{thm}\label{thm:nk}
Let $n\ge1$ and let $\Sigma$ be a surface of characteristic~$\chi$.
Let $\mu(n,\Sigma)$ be the minimum integer~$k$ 
such that there is no $\Sigma$-embeddable $(n,k)$-graphs.
Then we have
\begin{equation}\label{ans:nk}
\mu(n,\Sigma)=\begin{cases}
\max(0,\,3-\lceil n/2\rceil),&\text{if $\Sigma$ is homeomorphic to the sphere};\\[3pt]
\max(0,\,\lfloor (\,7-2n+\sqrt{49-24\chi}\,)/4\rfloor),&\text{otherwise}.
\end{cases}•
\end{equation}
\end{thm}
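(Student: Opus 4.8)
The plan is to attack the two-parameter problem through its \emph{dual} formulation, exactly as the abstract advertises. Rather than directly bounding $k$ for fixed $n$, I would first establish a clean duality: a $\Sigma$-embeddable $(n,k)$-graph exists if and only if a $\Sigma$-embeddable graph exists that is "good" in a combined sense, which can be measured by a single monotone quantity. Concretely, if $G$ is an $(n,k)$-graph then $G$ is $(n+2,k-1)$-adjacent in the sense that deleting two more vertices costs one unit of extendability; iterating, an $(n,k)$-graph with $k\ge1$ yields an $(n+2k,0)$-graph, i.e.\ an $(n+2k)$-factor-critical graph (after checking the base case that a $0$-extendable graph is just a graph with a perfect matching, so that an $(n,0)$-graph is exactly $n$-factor-critical). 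Conversely, one needs the sharper direction: from the extremal $m$-factor-critical $\Sigma$-embeddable graphs produced in the proof of Theorem~\ref{thm:fc}, one should be able to \emph{reserve} a matching of size $k$ and delete only $n=m-2k$ of the critical vertices, exhibiting a $\Sigma$-embeddable $(n,k)$-graph. This reduces the whole problem to the single-parameter function $\rho(\Sigma)$ from Theorem~\ref{thm:fc}, via an identity of the shape $\mu(n,\Sigma)=\max\bigl(0,\lceil(\rho(\Sigma)-1-n)/2\rceil\bigr)$ or similar, and one then just unwinds the floor/ceiling arithmetic to match~(\ref{ans:nk}).

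The key steps, in order, would be: (i) prove the elementary equivalences "$0$-extendable $=$ has a perfect matching" and "$(n,0)$-graph $=$ $n$-factor-critical", pinning down the boundary where $\mu(n,\Sigma)=0$; (ii) prove the \textbf{monotonicity lemma}: if $G$ is an $(n,k)$-graph with $k\ge 1$ then $G$ is an $(n+2,k-1)$-graph --- this is a short argument, since for any $(n+2)$-set $S$ write $S=S'\cup\{u,v\}$, note $G-S'$ is $k$-extendable hence contains a perfect matching, and the edge of that matching covering $u$ (or a suitable rerouting) shows $G-S'-\{u,v\}=G-S$ is $(k-1)$-extendable; (iii) deduce the upper bound $\mu(n,\Sigma)\le\max(0,\lceil(\rho(\Sigma)-1-n)/2\rceil)$, because once $n+2k\ge\rho(\Sigma)$ no $\Sigma$-embeddable $(n+2k)$-factor-critical graph exists, so no $\Sigma$-embeddable $(n,k)$-graph exists; (iv) prove the matching lower bound by \emph{construction}: take the extremal graphs witnessing $\rho(\Sigma)-1$ (the densest triangulation-like $\Sigma$-embeddable graphs, e.g.\ complete graphs $K_m$ on the appropriate surface, or the near-triangulations used by Su--Zhang / Plummer--Zha), verify they remain $(n,k)$-graphs for $n+2k=\rho(\Sigma)-1$ by an Euler-contribution / counting argument showing every $(n+2)$-cut is still "large enough" relative to the reserved $k$ edges; (v) assemble and simplify: substitute $\rho(\Sigma)=\lfloor(5+\sqrt{49-24\chi})/2\rfloor$ (resp.\ $5$ for the sphere) and check that $\max(0,\lceil(\rho-1-n)/2\rceil)$ collapses to $\max(0,\lfloor(7-2n+\sqrt{49-24\chi})/4\rfloor)$ (resp.\ $\max(0,3-\lceil n/2\rceil)$), which is a purely arithmetic verification splitting on parities.

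I expect step (iv), the lower-bound construction, to be the main obstacle. The subtlety is that knowing an $m$-factor-critical $\Sigma$-embeddable graph exists for $m=\rho(\Sigma)-1$ does \emph{not} automatically give an $(n,k)$-graph with $n+2k=m$: factor-criticality is the $k=0$ endpoint, and pushing $k$ up requires that after deleting an arbitrary $(n+2)$-subset there is still enough structure to extend any fixed perfect matching of a fixed $k$-edge set --- a genuinely stronger local condition. The resolution should come from choosing the extremal graphs with maximal symmetry and high connectivity (so that all small vertex cuts look alike) and then running an Euler-contribution bound on $G-S$ to show it is $k$-extendable; this is precisely where the "dual" viewpoint pays off, since the same $\sum$-of-Euler-contributions $=\chi$ inequality that powers Dean's Theorem~\ref{thm:k} and Theorem~\ref{thm:fc} can be applied to the deleted graph $G-S$ with a budget split between the $n$ deleted vertices and the $k$ reserved edges. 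A secondary, milder annoyance will be the sphere case and small-$\chi$ exceptional surfaces (projective plane, Klein bottle), where --- as in Theorems~\ref{thm:k} and~\ref{thm:fc} --- the generic formula needs a separate check, and where the $\max(0,\cdot)$ truncation interacts with small $n$; these I would handle by direct inspection of the finitely many relevant graphs.
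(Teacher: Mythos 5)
There is a genuine gap, and it sits at the heart of your reduction. Your step (ii) --- the ``monotonicity lemma'' that an $(n,k)$-graph with $k\ge1$ is an $(n+2,k-1)$-graph --- is not valid as you prove it, and the mechanism it relies on is false: deleting two \emph{non-adjacent} vertices from a $k$-extendable graph need not leave a $(k-1)$-extendable graph. (The cycle $C_6$ is $1$-extendable, but removing two vertices at distance two isolates their common neighbour, so the remainder has no perfect matching at all.) The standard fact only covers deleting the two endpoints of an \emph{edge}, and your ``suitable rerouting'' does not repair the non-adjacent case. The implication that is actually known (Liu--Yu, restated as Theorem~\ref{thm:nk:basic}(i) in the paper) runs in the \emph{opposite} direction: an $(n,k)$-graph is an $(n-2,k+1)$-graph for $n\ge2$, so $(n+2k)$-factor-critical graphs form a \emph{subclass} of $(n,k)$-graphs, not the other way around. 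Consequently your step (iii) collapses: the non-existence of a $\Sigma$-embeddable $(n+2k)$-factor-critical graph does not rule out a $\Sigma$-embeddable $(n,k)$-graph, and this non-existence statement is precisely the hard half of Theorem~\ref{thm:nk}. The paper cannot and does not reduce it to Theorem~\ref{thm:fc}; it runs the Euler-contribution machinery directly on $(n,k)$-graphs, using Lemma~\ref{lem:nk:bip} (no large induced bipartite subgraph), Lemma~\ref{lem:d:x} (a degree bound at a control point in terms of the number of incident triangles), and the Lou--Yu connectivity theorem (Theorem~\ref{thm:LY:bip}) to handle small-order graphs, plus separate treatment of nine sporadic pairs $(n,k)$. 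It is true \emph{a posteriori} that $g(n,k)=g(n+2k,0)$, but that is a conclusion of the paper's argument, not something one may assume.

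Two smaller points. First, your step (iv), which you single out as the main obstacle, is in fact the easy direction: $K_{n+2k+2}$ is $(n+2k)$-factor-critical and hence an $(n,k)$-graph by the correct (Liu--Yu) direction of the monotonicity, so the constructions come for free from Theorem~\ref{thm:g:K}; no Euler-contribution analysis of $G-S$ is needed there. Second, even granting your reduction, the closed form should be $\mu(n,\Sigma)=\max\bigl(0,\lceil(\rho(\Sigma)-n)/2\rceil\bigr)$ rather than $\lceil(\rho(\Sigma)-1-n)/2\rceil$ (e.g.\ $\rho(S_1)=6$ and $\mu(1,S_1)=3$, not $2$); you hedged with ``or similar,'' but the arithmetic needs care when you unwind the floors.
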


To the best of our knowledge, 
this is the first time that an extendability-genus problem 
with more than one parameter is solved.
In fact, we often find the target $\mu(n,\Sigma)$ 
not easy to be understood at the first time when we deal with the problem.
On the way of making it more approachable,
we notice that
there have always been two dual approaches to graph embedding problems:
fix the graph and vary the surface, or fix the surface and vary the graph.
For example,
the Heawood Map Theorem~\cite{Hea1890}
can be viewed in either way: find the largest
complete graph embeddable in the surface of a given genus,
or find the lowest genus surface in which a given complete graph
can be embedded.
Robertson and Seymour's Kuratowski-type theorem~\cite{RS90},
which solves a problem of Erd\H{o}s and K\"onig,
is an example of fix-the-surface.
Inspired from the above idea, we formulate the following dual problem.
Denote by~$S_h$ the orientable surface of genus~$h$,
and denote by~$N_k$ the non-orientable surface of genus~$k$.

\begin{problem}\label{prb:nk2}
Let $n\ge1$ and $k\ge0$.
Define $g(n,k)$ to be the minimum integer 
such that there exists an $S_{g(n,k)}$-embeddable $(n,k)$-graph.
Define $\tg(n,k)$ to be the minimum integer 
such that there exists an $N_{\tg(n,k)}$-embeddable $(n,k)$-graph.
Find~$g(n,k)$ and~$\tg(n,k)$.
\end{problem}

By using the technique of Euler contribution and analyze the graph theoretical properties
of $(n,k)$-graphs, 
we find the complete answer to Problem~\ref{prb:nk2}; see Theorem~\ref{thm:ans:nk2}.
Then, Theorem~\ref{thm:nk} can be obtained 
by inversely computing the extendability $k$ in terms of the genus
and the factor-criticality according to the orientability of the surface.

Note that $(0,k)$-graphs are exactly $k$-extendable graphs,
while $(n,0)$-graphs are exactly $n$-factor-critical graphs.
We remark that, however, 
Formula~(\ref{ans:nk}) appears quite different from Formula~(\ref{ans:mu}).
The essential reason may be the fact that
a $k$-extendable graph may be bipartite 
while an $(n,k)$-graph with $n\ge1$ cannot be bipartite.
In contrast, Formula~(\ref{ans:fc}) looks like a relative of~(\ref{ans:nk}).
In fact, Formula~(\ref{ans:nk}) will be obtained from inversely solving some
inequalities which arised from the answer to Problem~\ref{prb:nk2}.
At the end of this paper, we include an appendix that
via the same approach one may retrieve Formula~(\ref{ans:fc}).

\section{Preliminaries}\label{sec:preliminary}

In this section, we recall some necessary concepts, notation and known results which
will be used in the subsequent sections.
Let~$G$ be a simple graph.
Denote by~$V(G)$ the set of vertices of~$G$,
and by~$E(G)$ the set of edges of~$G$.
The number $|V(G)|$ of vertices of~$G$ is denoted by~$|G|$ for short.
As usual,
we use the notation~$\delta(G)$ to denote the minimum degree of~$G$,
and use~$\kappa(G)$ to denote the connectivity of~$G$.
Let~$v$ be a vertex of~$G$.
A vertex~$u$ is said to be a neighbor of~$v$ if~$uv$ is an edge of~$G$.
We denote by~$N(v)$ the induced subgraph of~$G$ generated by
the neighbors of~$v$.

\subsection{The surface embedding}

Every surface is homeomorphic to an orientable surface or
a non-orientable surface.
Denote by~$S_h$ the orientable surface of genus~$h$,
and by~$N_k$ the non-orientable surface of genus~$k$.
Let~$\Sigma$ be a surface of genus~$g(\Sigma)$.
Then the Euler characteristic~$\chi(\Sigma)$ is defined to be
\[
\chi(\Sigma)=\begin{cases}
2-2g(\Sigma),&\text{if $\Sigma$ is orientable},\\
2-g(\Sigma),&\text{if $\Sigma$ is non-orientable}.
\end{cases}•
\]
Let $G$ be a graph.
We say that~$G$ is {\em $\Sigma$-embeddable} 
if it can be drawn on~$\Sigma$ without crossing itself.
The minimum value~$g$ such that~$G$ is $S_g$-embeddable
is said to be the {\em genus} of~$G$, denoted~$g(G)$.
Any embedding of~$G$ on~$S_{g(G)}$
is said to be a {\em minimal (orientable) embedding}.
Similarly,
the minimum value~$\tg$ such that~$G$ is $N_{\tg}$-embeddable
is said to be the {\em non-orientable genus} of~$G$, 
denoted~$\tg(G)$.
Any embedding of~$G$ on~$N_{\tg(G)}$
is said to be a {\em minimal (non-orientable) embedding}.
An embedding is said to be {\em $2$-cell} (or {\em cellular}) if 
every face is homeomorphic to an open disk.
Working on minimal embeddings, one should notice
the following two fundamental results 
which are due to Youngs~\cite{You63} and Parsons et al.~\cite{PPPV87} respectively.

\begin{thm}[Youngs~\cite{You63}]
Every minimal orientable embedding of a graph is $2$-cell.
\end{thm}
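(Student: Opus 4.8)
The plan is to argue by contradiction via the classical ``cut-and-cap'' surgery on a bad face. I will take $G$ to be connected, which is the setting in which the statement holds (for a disconnected graph a minimal embedding typically has a non-cellular face meeting several components). So suppose $G$ admits a minimal orientable embedding $\iota\colon G\hookrightarrow S_g$ with $g=g(G)$, and suppose some face $F$ of this embedding is not homeomorphic to an open disk; I aim to contradict the minimality of $g$.

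First I would extract a ``bad'' curve. Since $F$ is an open subsurface of $S_g$, the classification of surfaces shows that $F$ is a disk exactly when it is simply connected; as $F$ is not a disk, it contains a piecewise-linear simple closed curve $\gamma$ that is not null-homotopic in $F$, and $\gamma$ is automatically disjoint from $\iota(G)$ since it lies inside the face $F$.

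Next I would perform surgery on $\gamma$, splitting into two cases according to whether $\gamma$ separates $S_g$. If $\gamma$ is non-separating, then cutting $S_g$ along $\gamma$ and capping the two resulting boundary circles with disks produces a closed orientable surface $S'$ with $g(S')=g-1$ into which $G$ still embeds (because $\gamma$ avoided $\iota(G)$), contradicting $g=g(G)$. If $\gamma$ is separating, then $S_g\smallsetminus\gamma$ has two components, and connectivity of $G$ forces $\iota(G)$ into the closure of one of them, say $A$, with $B$ the other; capping $A$ and $B$ along $\gamma$ gives closed orientable surfaces $A'\supset\iota(G)$ and $B'$ with $S_g\cong A'\#B'$, hence $g=g(A')+g(B')$. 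If $g(B')\ge 1$, this again lowers the genus of a surface carrying $G$, a contradiction; if $g(B')=0$, then $B'$ is a sphere, so $B$ is a disk which, being connected and disjoint from $\iota(G)$, must sit inside a single face, namely $F$ since $\gamma=\partial B\subset F$, making $\gamma$ contractible in $F$ and contradicting its choice. Either way we are done, so every face is a $2$-cell.

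I expect the main obstacle to be the topological bookkeeping in the surgery step: verifying that cut-and-cap preserves orientability, that it changes the genus by exactly the claimed amount (a short Euler-characteristic computation in each case), and that capping a separating curve exhibits $S_g$ as a connected sum with additive genus. These are standard facts about surfaces, but they carry the whole argument; the graph $G$ itself enters only through its connectedness and the fact that $\gamma$ lies in a face and hence misses $G$.
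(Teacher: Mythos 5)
The paper offers no proof of this statement: it is quoted as a classical theorem with a citation to Youngs (1963), so there is nothing in the text to compare your argument against. Your cut-and-cap surgery proof is the standard one and is essentially correct. You are right to restrict to connected $G$ (as literally stated the claim fails for disconnected graphs, and the paper only ever invokes it for connected ones). The three facts carrying the argument are all standard but worth naming precisely: (i) a face of a graph embedding in a closed surface is the interior of a compact bordered surface, and any such surface other than the disk contains a simple closed curve essential in it --- this is what licenses the choice of $\gamma$; (ii) cutting along a non-separating curve and capping the two boundary circles preserves orientability and raises the Euler characteristic by $2$, hence lowers the genus by exactly one; (iii) for a separating $\gamma$, genus is additive over the resulting connected sum. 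In the residual subcase $g(B')=0$ your contradiction needs the observation that $B$, being connected and disjoint from $\iota(G)$, lies in a single face, and that this face is $F$ because $F$ is an open set containing $\gamma=\partial B$ and hence meets $B$; you gesture at this and the reasoning is sound, but it is the one step I would spell out fully, since it is exactly where the hypothesis that $\gamma$ is essential \emph{in $F$} (rather than in $S_g$) gets used.
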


\begin{thm}[Parson--Pica--Pisanski--Ventre~\cite{PPPV87}]
Every graph has a minimal non-orientable embedding which is $2$-cell.
\end{thm}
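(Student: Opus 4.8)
The plan is to start from an arbitrary minimal non-orientable embedding of $G$ and repair it into a $2$-cell one, using only cut-and-cap surgery together with Youngs' theorem. Let $k=\tg(G)$ and fix a minimal non-orientable embedding $\Psi$ of $G$ on $N_k$. If $\Psi$ is already $2$-cell we are done, so suppose some face $F$ is not homeomorphic to an open disk. Then $F$, viewed as a subsurface with boundary, is either of positive genus, non-orientable, or has more than one boundary component; in each case $F$ contains a non-contractible simple closed curve $\gamma$, and since $\gamma$ lies in the interior of a face it is disjoint from $G$. The first step is to classify $\gamma$ as $1$-sided, $2$-sided non-separating, or $2$-sided separating, and to perform the corresponding surgery: cut $N_k$ along $\gamma$ and cap the resulting boundary circle(s) with disks, retaining the piece that carries $G$.

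The second step tracks the Euler genus and the orientability after surgery. Cutting along a $1$-sided curve and capping lowers the Euler genus by $1$; cutting along a $2$-sided non-separating curve and capping both circles lowers it by $2$; and for a $2$-sided separating curve the piece $\hat B$ not meeting $G$ is not a disk (as $\gamma$ is non-contractible), so capping the $G$-side produces a closed surface $\hat A$ with $G\hookrightarrow\hat A$ whose Euler genus is at most $k-1$ by additivity of the Euler genus under the splitting $N_k=\hat A\#\hat B$. Thus in all three cases $G$ embeds in a closed surface $\hat A$ of Euler genus at most $k-1$.

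The third step is the dichotomy that closes the argument. If $\hat A$ is non-orientable, then $\tg(G)\le k-1$, contradicting the minimality of $k$. If $\hat A$ is orientable of genus $h$, then its Euler genus is $2h\le k-1$, so $g(G)\le h$ gives $2g(G)+1\le k$; combined with the standard relation $\tg(G)\le 2g(G)+1$ and $\tg(G)=k$ this forces $2g(G)+1=k$. By Youngs' theorem a minimal orientable embedding of $G$ on $S_{g(G)}$ is $2$-cell; applying the standard elementary operation that adds one crosscap along a face boundary, raising the Euler genus by one while preserving $2$-cellularity and rendering the surface non-orientable, yields a $2$-cell embedding on $N_{2g(G)+1}=N_k$, exactly the desired minimal $2$-cell non-orientable embedding.

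The main obstacle, and the point where the non-orientable case genuinely departs from Youngs' theorem, is precisely this third step: unlike the orientable setting, reducing along a curve in a face need not return one to a non-orientable surface, so one cannot merely invoke minimality to force a contradiction. The resolution is the observation that the orientable landing is never a contradiction but instead \emph{delivers} the conclusion directly, through the tight equality $2g(G)+1=k$ and one added crosscap. The remaining technical points to verify carefully are that a non-contractible curve always exists inside a non-disk face and that the Euler-genus bookkeeping under capping is exactly as claimed for each of the three curve types; both are routine once the surgeries are set up.
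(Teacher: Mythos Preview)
The paper does not prove this statement; it is quoted as a background result from Parsons--Pica--Pisanski--Ventre and used without argument, so there is no proof in the paper to compare against.

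On its own merits your sketch is essentially the standard surgery argument and is sound, with one point that should be tightened. In the final step you say one ``adds one crosscap along a face boundary \ldots\ while preserving $2$-cellularity.'' Inserting a crosscap in the interior of a face does \emph{not} preserve $2$-cellularity: that face becomes an open M\"obius band, not a disk. The correct move is to pick an edge $e$ of $G$ lying on two distinct faces of the $2$-cell embedding on $S_{g(G)}$ and reverse its signature (equivalently, reroute $e$ through a crosscap); this merges the two incident faces into one, keeps the embedding $2$-cell by construction, makes the surface non-orientable, and raises the Euler genus by exactly one, landing on $N_{2g(G)+1}=N_k$ as you want. You should also note that such an edge exists whenever $G$ is not a forest; the forest case is degenerate (and irrelevant to the uses later in the paper) but strictly speaking needs a word, since a tree has no $2$-cell embedding on any $N_j$.
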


Without special explanation,
we use the terminology ``embedding'' to mean a cellular embedding.

The formula of non-orientable genera of complete graphs was found by
Franklin~\cite{Fra34} in~1934 for the complete graph~$K_7$,
and by Ringel~\cite{Rin54} in~1954 for the other~$K_n$.
Early contributors include Heawood, Tietze, Kagno, Bose, Coxeter, Dirac, and so on;
see~\cite{Rin54}.
The more difficult problem of finding the genera of complete graphs
has been explored by Heffter, Ringel, Youngs, Gustin, Terry, Welch, Guy, Mayer, and so on.
A short history can be found in the well-known work~\cite{RY68} of Ringel and Youngs in~1968,
who settled the last case.
These formulas are as follows.

\begin{thm}\label{thm:g:K}
Let $n\ge5$. Then we have
\vskip3pt
\begin{itemize}
\item[(i)]
$\tg(K_7)=3$, and $\tg(K_n)=\lceil (n-3)(n-4)/6\rceil$ for $n\ne7$;
\vskip3pt
\item[(ii)] 
$g(K_n)=\lceil (n-3)(n-4)/12\rceil$.
\end{itemize}
\end{thm}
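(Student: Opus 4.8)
The plan is to prove matching lower and upper bounds for both genera. The lower bounds follow uniformly from Euler's formula, whereas the upper bounds require explicit embeddings and constitute the heart of the matter.

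First I would establish the lower bounds. Fix a minimal embedding of $K_n$ in a surface of Euler characteristic~$\chi$; by the theorems of Youngs and of Parsons--Pica--Pisanski--Ventre cited above, such an embedding is cellular, so Euler's formula $V-E+F=\chi$ holds, where $V$, $E$, $F$ count vertices, edges, faces. For $K_n$ we have $V=n$ and $E=\binom{n}{2}=n(n-1)/2$. Since $K_n$ is simple with $n\ge5$, every face is bounded by at least three edges while every edge borders at most two faces, so $3F\le2E$ and $F\le n(n-1)/3$. Substituting,
\[
\chi=V-E+F\le n-\frac{n(n-1)}{2}+\frac{n(n-1)}{3}=n-\frac{n(n-1)}{6}.
\]
For a minimal orientable embedding $\chi=2-2g(K_n)$ yields $g(K_n)\ge(n-3)(n-4)/12$, and for a minimal non-orientable one $\chi=2-\tg(K_n)$ yields $\tg(K_n)\ge(n-3)(n-4)/6$. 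As both genera are integers, rounding up gives the claimed bounds $g(K_n)\ge\lceil(n-3)(n-4)/12\rceil$ and $\tg(K_n)\ge\lceil(n-3)(n-4)/6\rceil$.

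The exceptional value $\tg(K_7)=3$ needs a separate lower bound, since the Euler count gives only $\tg(K_7)\ge2$. Here I would observe that any embedding of $K_7$ in the Klein bottle $N_2$ has $\chi=0$, forcing $F=14$ and hence $3F=2E$, so every face would be a triangle; thus such an embedding would be a triangulation. I would then show combinatorially that no rotation system on $N_2$ realises $K_7$ with all triangular faces, i.e.\ $K_7$ does not triangulate the Klein bottle. Consequently $K_7$ does not embed in $N_2$ at all, so $\tg(K_7)\ge3$; since $K_7$ does embed in $N_3$ (produced by the construction below), the value is exactly~$3$.

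The main obstacle is the matching upper bounds, which amount to exhibiting embeddings attaining the lower bounds --- the content of the Map Color Theorem of Ringel, Youngs, and collaborators. The strategy is to construct, for each residue class of $n$ modulo~$12$ in the orientable case and modulo~$6$ in the non-orientable case, an embedding of $K_n$ forcing the genus down to the bound. When $12\mid(n-3)(n-4)$, that is $n\equiv0,3,4,7\pmod{12}$, the target is a triangular embedding (all faces triangles, so $3F=2E$), built via current graphs: a suitable base graph with a current (voltage) assignment whose derived graph is the required triangulation of $K_n$. For the remaining residues one constructs near-triangular embeddings with a controlled number of larger faces, or adds a bounded number of handles to a triangulation of a smaller complete graph, checking in each case that the genus equals $\lceil(n-3)(n-4)/12\rceil$. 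The non-orientable case modulo~$6$ is handled analogously with rotation systems on non-orientable surfaces and crosscaps in place of handles, the sole exception being $K_7$ treated above. Carrying out these constructions for every residue class --- and verifying that each current graph yields a simple, correctly triangulated derived embedding --- is by far the longest and most delicate part of the argument, while the Euler lower bound and the small exceptional analysis are comparatively routine.
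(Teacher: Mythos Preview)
The paper does not prove this theorem at all: it is quoted as a classical result, with references to Franklin~\cite{Fra34}, Ringel~\cite{Rin54}, and Ringel--Youngs~\cite{RY68}, and is used only as a black box (to know the genus of $K_{n+2k+2}$). So there is no ``paper's own proof'' to compare against.

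That said, your outline is an accurate high-level description of how the result is actually established in the literature: the Euler-formula lower bound is the easy half, the Franklin argument that $K_7$ does not triangulate the Klein bottle handles the one exception, and the Ringel--Youngs current-graph constructions, carried out residue class by residue class, supply the matching upper bounds. Just be aware that the last part is not something one ``would then carry out'' in a few lines---it is the content of an entire monograph---so in the context of this paper the honest thing is simply to cite the result, exactly as the authors do.
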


\subsection{The Euler contribution}
Let $G\to\Sigma$ be an embedding of a graph~$G$ on the surface~$\Sigma$.
Euler's formula states that $|G|-e+f=\chi(G)$, where $e$ is the number of edges of~$G$,
and $f$ is the number of faces of~$G$ in the embedding.
Let $x_i$ denote the size of the $i$th face that contains~$v$, i.e.,
the length of its boundary walk. 
The {\em Euler contribution} of~$v$ is defined to be
\[
\Phi(v)=1-{d(v)\over 2}+\sum_{i}{1\over x_i},
\]
where the sum ranges over all faces~$F_i$ containing~$v$.
One should keep in mind that 
a face may contribute more than one angle to a vertex. This can be seen from, for example,
the embedding of the complete graph~$K_5$ on the torus.
From Euler's formula, in any embedding of a connected graph~$G$,
we have $\sum_v\Phi(v)=\chi(\Sigma)$. So there exists a vertex such that 
\begin{equation}\label{def:Phi}
\Phi(v)\ge{\chi(\Sigma)\over |G|}.
\end{equation}
Such a vertex is said to be a {\em control point} of the embedding.
Definition~(\ref{def:Phi}) implies the following lemma immediately,
see~\cite[Lemma 2.5]{Plu88} and~\cite[Lemma 2.5]{Dean92}. 

\begin{lem}\label{lem:ctrl}
Let~$G$ be a connected graph of at least $3$ vertices.
Let $G\to\Sigma$ be an embedding.
Let~$v$ be a control point which is contained in~$x$ triangular faces.
Then we have
\begin{equation}\label{ineq:ctrl}
{d(v)\over 6}\le {d(v)\over 4}-{x\over 12}\le 1-{\chi(\Sigma)\over |G|}.
\end{equation}
\end{lem}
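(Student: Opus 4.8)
The plan is to unwind the definition of a control point and bound the face-size sum $\sum_i 1/x_i$ occurring in the Euler contribution $\Phi(v)$ from above by a linear expression in $d(v)$ and $x$; the two asserted inequalities then fall out by elementary rearrangement.

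First I would record the combinatorial fact that, in a $2$-cell embedding, the number of face corners (angles) at a vertex $v$ equals $d(v)$: reading the rotation at $v$, each pair of cyclically consecutive edges determines exactly one corner, lying in a unique face, so the sum $\sum_i 1/x_i$ over the faces meeting $v$, counted with multiplicity, has precisely $d(v)$ summands. Next, since $G$ is simple, connected, and has at least three vertices, every face boundary walk has length at least $3$ — a walk of length $1$ or $2$ would force a loop, a multi-edge, or collapse $G$ to $K_2$ — so each summand is at most $1/3$, with equality precisely for the corners lying in triangular faces, while the corners lying in non-triangular faces (necessarily of size at least $4$) contribute at most $1/4$ each. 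Because a triangular face of a simple graph is a $3$-cycle on three distinct vertices, it contributes exactly one corner at $v$; hence the number $x$ of triangular faces at $v$ coincides with the number of triangular corners at $v$, and in particular $x\le d(v)$.

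Combining these observations gives
\[
\sum_i \frac1{x_i}\le \frac{x}{3}+\frac{d(v)-x}{4}=\frac{d(v)}{4}+\frac{x}{12}.
\]
Substituting into $\Phi(v)=1-d(v)/2+\sum_i 1/x_i$ and using $\chi(\Sigma)/|G|\le\Phi(v)$, one obtains
\[
\frac{\chi(\Sigma)}{|G|}\le 1-\frac{d(v)}{2}+\frac{d(v)}{4}+\frac{x}{12}=1-\frac{d(v)}{4}+\frac{x}{12},
\]
which rearranges to the right-hand inequality $d(v)/4-x/12\le 1-\chi(\Sigma)/|G|$. For the left-hand inequality, clearing denominators shows that $d(v)/6\le d(v)/4-x/12$ is equivalent to $x\le d(v)$, already established. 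I do not anticipate a genuine obstacle; the only points demanding care are the multiplicity bookkeeping (a face meeting $v$ several times still contributes its reciprocal once per corner, and the summand count is exactly $d(v)$) and the lower bound $x_i\ge 3$ on face sizes, which is precisely where the hypothesis $|G|\ge 3$ is used and where the sharp value $1/3$ per triangular corner comes from.
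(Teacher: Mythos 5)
Your proof is correct and is exactly the standard angle-counting argument that the paper leaves implicit (it cites Plummer and Dean and calls the lemma immediate from the definition of a control point): count the $d(v)$ corners at $v$, bound each summand $1/x_i$ by $1/3$ for triangular corners and $1/4$ otherwise, and rearrange. The bookkeeping points you flag --- corners counted with multiplicity, one corner per triangular face in a simple graph, and $x_i\ge 3$ from simplicity and $|G|\ge 3$ --- are precisely the right ones.
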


We will use it to find the lower bound of the numbers~$g(n,k)$ and~$\tg(n,k)$;
see the proofs of Theorem~\ref{thm:n>=6:k=0}, Theorem~\ref{thm:g},
and Theorem~\ref{thm:g=1}.

\subsection{The matching extendability}

Let $G$ be a graph and $k\ge0$.
A {\em $k$-matching} of~$G$ is a set of~$k$ pairwise disjoint edges.
For any $k$-matching~$M$,
we use the notation~$|M|$ to denote the number~$k$ of edges in~$M$.
It will not be confusing with the notation~$|G|$ from context,
which stands for the number of vertices.

A $k$-matching is said to be {\em perfect} if~$G$ has exactly~$2k$ vertices.
The graph~$G$ is said to be {\em $k$-extendable} if
it has a perfect matching, and for any $k$-matching~$M$, 
the graph~$G$ has a perfect matching containing~$M$.
Plummer~\cite{Plu80} gave the following basic facts on 
matching extendability.

\begin{thm}[Plummer~\cite{Plu80}]\label{thm:ext:basic}
Let $k\ge0$ and let~$G$ be a connected $k$-extendable graph. Then we have
\begin{itemize}
\item[(i)]
if $k\ge1$, then the graph~$G$ is $(k-1)$-extendable;
\vskip 2pt
\item[(ii)]
the graph~$G$ is $(k+1)$-connected and thus $\delta(G)\ge k+1$.
\end{itemize}
\end{thm}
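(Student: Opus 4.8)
The plan is to prove part~(ii) first and then deduce part~(i) from it. We use the convention (standard in this area, cf.~\cite{Plu80}) that a $k$-extendable graph has at least $2k+2$ vertices; this order bound is used throughout, and without it even the statement fails — for instance $P_4$ literally satisfies the definition for $k=2$ but is not $1$-extendable. Part~(i) is vacuous for $k=1$, where it merely restates that $G$ has a perfect matching, and part~(ii) is immediate for $k=0$; so the substance is the connectivity bound for $k\ge1$ and the extension property for $k\ge2$.

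\emph{Part~(ii).} It is enough to prove $\kappa(G)\ge k+1$, since $\kappa(H)\le\delta(H)$ for every graph $H$ on at least two vertices. Suppose, for contradiction, that $G$ has a cut set $S$ with $|S|\le k$; choose $S$ of minimum cardinality, so that $G-S$ has components $C_1,\dots,C_r$ with $r\ge2$ and every vertex of $S$ has a neighbour in each $C_i$. The idea is a parity obstruction. I would build a $k$-matching $M$ by first matching all of $S$ into $\bigcup_iC_i$, routing these $|S|$ edges among the components so that the number of them entering some fixed $C_j$ has the wrong parity relative to $|C_j|$ — this rerouting is possible because every vertex of $S$ reaches at least two components — and then padding $M$ up to exactly $k$ edges using edges lying inside one of the larger components. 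Since $M$ saturates $S$, any perfect matching $F\supseteq M$ must match the vertices of $C_j$ outside $V(M)$ among themselves, which forces a parity opposite to the one we arranged; hence no such $F$ exists, contradicting $k$-extendability. The one point that needs care is the realisability of the prescribed routing as an actual matching; a short Hall-type argument, using the order bound and the minimum-cut-set property (together with trivial adjustments when some $C_i$ is very small), takes care of it.

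\emph{Part~(i).} Let $M$ be a $(k-1)$-matching; we must find a perfect matching containing it. If the subgraph $G-V(M)$ contains an edge $e$, then $M\cup\{e\}$ is a $k$-matching, so by hypothesis it extends to a perfect matching $F$, and then $F\supseteq M$. By part~(ii) the graph $G$ is $(k+1)$-connected, so when $k\le2$ the graph $G-V(M)$ is obtained by deleting only $2k-2\le k$ vertices; it is therefore still connected and has at least $|V(G)|-(2k-2)\ge4$ vertices, hence contains an edge, and we are done. For $k\ge3$ the only remaining possibility is that $I:=V(G)\setminus V(M)$ is an independent set, and the real work is to rule this out. Here I would argue that an independent $I$ is incompatible with $k$-extendability: fixing a perfect matching $P$, independence of $I$ forces $P$ to match $I$ injectively into $V(M)$, so $|I|$ is even and $|I|\le2(k-1)$; one then assembles, from edges of $M$ together with the $P$-edges joining $I$ to its image in $V(M)$, a $k$-matching that no perfect matching can contain — essentially because, after enough of $V(M)$ has been occupied by edges of $M$, the vertices of $I$ have too few admissible partners left in $V(M)$ — and this contradicts the extension property.

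The routine cases cost nothing; the step I expect to be the main obstacle is the exclusion of that degenerate subcase of part~(i) — showing that the complement of a $(k-1)$-matching in a $k$-extendable graph cannot be independent. It is a finite, elementary counting argument, but it is where the full force of $k$-extendability and of the order bound $|V(G)|\ge2k+2$ is actually spent; the routing bookkeeping in part~(ii) is a secondary, though still non-trivial, ingredient.
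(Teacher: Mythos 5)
The paper offers no proof of this theorem: it is quoted verbatim from Plummer~\cite{Plu80} as a known prerequisite, so there is no internal argument to compare yours against. Judged on its own, your proposal has the right architecture --- prove the connectivity bound (ii) by exhibiting a parity obstruction at a hypothetical cut set of size at most $k$, then prove (i) by extending a $(k-1)$-matching $M$ with one more edge and reducing to the case where $V(G)\setminus V(M)$ is independent --- and this is essentially Plummer's own strategy. But in both parts the step you defer is the entire content of the theorem, and one of your concrete suggestions cannot work as stated.

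In part (ii), the assertion that the prescribed routing of $S$ into the components ``is possible because every vertex of $S$ reaches at least two components,'' settled by ``a short Hall-type argument,'' is where the proof actually lives: when $|S|=k$ there is no room to pad, when several vertices of $S$ must be routed into a component that is a single vertex they compete for the same neighbour, and when the padding edges must avoid the routed endpoints inside a small component the count is tight. Plummer's proof is a page of case analysis for exactly these reasons; naming the obstruction is not the same as realising it. In part (i), the reduction to the case where $I=V(G)\setminus V(M)$ is independent is correct and cheap, but your recipe for the contradiction --- assembling a $k$-matching ``from edges of $M$ together with the $P$-edges joining $I$ to its image in $V(M)$'' --- does not produce a matching at all: since $I$ is independent, every $P$-edge leaving $I$ terminates in $V(M)$, and $V(M)$ is already saturated by $M$, so the two edge sets share endpoints. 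To isolate a vertex $v\in I$ you must first delete some edges of $M$ and re-route their endpoints into $I\setminus\{v\}$, and whether this can always be done (for instance when $N(v)=V(M)$, or when no edge of $M$ avoids $N(v)$) is precisely the counting argument that has to be carried out. As written, the proposal correctly locates both difficulties but resolves neither.
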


Next is a slightly deeper result, which is due to Lou and Yu~\cite[Theorem 7]{LY04};
see also~\cite[Chap.~6]{YL09B}. 

\begin{thm}[Lou--Yu~\cite{LY04}]\label{thm:LY:bip}
If~$G$ is a $k$-extendable graph of order at most~$4k$,
then either the graph~$G$ is bipartite or the connectivity $\kappa(G)$ of~$G$ is at least~$2k$.
\end{thm}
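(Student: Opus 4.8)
The plan is to argue by contradiction. Assume $G$ is $k$-extendable (so connected, with a perfect matching, and $|V(G)|\ge 2k+2$), that $|V(G)|\le 4k$, that $G$ is non-bipartite, and that $\kappa(G)\le 2k-1$; I want to reach a contradiction, which yields the stated dichotomy. By Theorem~\ref{thm:ext:basic}(ii) we have $\kappa(G)\ge k+1$ and $\delta(G)\ge k+1$, so in fact $k+1\le\kappa(G)\le 2k-1$ (in particular $k\ge2$). Fix a minimum vertex cut $S$, write $c=|S|$, and let $D_1,\dots,D_t$ ($t\ge2$) be the components of $G-S$, ordered by $|D_1|\ge\cdots\ge|D_t|$; by minimality of $S$ every vertex of $S$ has a neighbour in every $D_i$. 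The arithmetic that drives everything is
\[
\sum_{i=1}^t|D_i|\;=\;|V(G)|-c\;\le\;4k-(k+1)\;=\;3k-1 ,
\]
which bounds both $t$ and the sizes $|D_i|$; combined with $\delta(G)\ge k+1$ it also shows that a component $D_i$ with $|D_i|\le k$ sends at least $k+2-|D_i|$ edges from each of its vertices into $S$, i.e.\ the small components are heavily attached to $S$.

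The tool I would use is the Tutte--Berge reformulation of extendability: $G$ is $k$-extendable if and only if $G$ has a perfect matching and, for every $k$-matching $M$ and every $T\subseteq V(G)\setminus V(M)$, one has $o\bigl(G-V(M)-T\bigr)\le|T|$, where $o(\cdot)$ counts odd components. The heart of the argument is to produce a single pair $(M,T)$ violating this inequality. Taking $T=S\setminus V(M)$ gives $G-V(M)-T=\bigcup_i\bigl(D_i-V(M)\bigr)$ and $|T|=c-|S\cap V(M)|$, so one wants a $k$-matching $M$ that (a) saturates as many vertices of $S$ as possible --- by matching them into the heavily attached small components, into other vertices of $S$ when $S$ carries edges, and padding inside $D_1$ --- while (b) breaking the $D_i$'s into as many odd pieces as possible. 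The inequality $\sum|D_i|\le 3k-1$ together with $c\le 2k-1$ then forces $o(G-V(M)-T)>|T|$ in every configuration except a narrow extremal one --- roughly $t=2$, $c$ near $2k-1$, and $|D_1|,|D_2|$ nearly balanced, so that there is exactly enough room to absorb all of $S$ by matchings meeting $D_1\cup D_2$ and no violating pair of this type exists.

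It remains to dispose of this extremal configuration, and this is where non-bipartiteness finally enters: there, repeated use of the Tutte--Berge condition forces the edges of $G$ incident to $S$, and those inside $D_1$ and $D_2$, into so rigid a pattern that $G$ is compelled to admit a bipartition (with $S$ split across the two sides, attached in the manner of a complete bipartite graph) --- contradicting non-bipartiteness. Equivalently, one takes an odd cycle of $G$, absorbs an even sub-path of it into a carefully chosen $k$-matching, and checks that the surviving odd arc lies in an odd component of the deleted graph, once more violating Tutte--Berge. I expect the real difficulty to lie precisely in steps two and three: organizing the case analysis on $t$ and on the multiset $\{|D_i|\}$ so the counting genuinely produces $o(G-V(M)-T)>|T|$, and, in the extremal case, pinning the structure down tightly enough to reach the contradiction. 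It is exactly the hypothesis $|V(G)|\le 4k$ that leaves ``just enough room'' for these arguments, so the counting has to be carried out with no slack if one wants the sharp constant rather than a weaker order bound.
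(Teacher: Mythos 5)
The paper does not prove this statement: Theorem~\ref{thm:LY:bip} is imported verbatim from Lou and Yu~\cite{LY04} and used as a black box in the proof of Theorem~\ref{thm:g}, so there is no in-paper argument to measure yours against. Judged on its own, your proposal is a plausible plan rather than a proof. The setup is sound: the reduction to a minimum cut $S$ with $k+1\le|S|=c\le 2k-1$ (hence $k\ge2$), the bound $\sum_i|D_i|\le 3k-1$, the degree count showing each vertex of a component $D_i$ with $|D_i|\le k$ has at least $k+2-|D_i|$ neighbours in $S$, and the reformulation of $k$-extendability via Tutte's theorem applied to $G-V(M)$ are all correct and are a reasonable launching point.

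The gap is that the two steps carrying the entire content of the theorem are asserted, not performed. First, the claim that one can always build a $k$-matching $M$ with $T=S\setminus V(M)$ satisfying $o(G-V(M)-T)>|T|$ ``in every configuration except a narrow extremal one'' is never verified: a $k$-matching has only $k$ edges, hence saturates at most $2k$ vertices, while $c$ may be as large as $2k-1$ and the goals of saturating $S$ and of fragmenting the $D_i$ into odd pieces compete for the same budget; moreover, when $t=2$ and $|D_1|,|D_2|>k$ there are no ``small, heavily attached components,'' so the mechanism you describe for absorbing $S$ is unavailable exactly where the theorem is tight (e.g.\ the profile of $K_{2k,2k}$ minus a vertex class split). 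Second, the extremal case is dispatched only by metaphor (``so rigid a pattern that $G$ is compelled to admit a bipartition''), with two alternative vague mechanisms offered and neither executed; yet this is precisely where the non-bipartiteness hypothesis must do real work, and without it the statement is false (bipartite $k$-extendable graphs of order $4k$ with $\kappa=2k$ exist, so the dichotomy is sharp). Since you yourself locate ``the real difficulty'' in these two steps, the proposal as it stands establishes only the framing of the problem, not the theorem.
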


We will use it in the proof of Theorem~\ref{thm:g}.

\subsection{The $(n,k)$-graphs}

Let~$G$ be a graph. Let $n$ and~$k$ be nonnegative integers such that
\begin{equation}\label{def:nk}
|G|\ge n+2k+2
\rmand
|G|-n\equiv 0\pmod2.
\end{equation}
Liu and Yu~\cite{LY01} introduced the concept of an {\em $(n,k)$-graph}
which was defined to be a graph~$G$ such that
if every subgraph of~$G$ obtained by deleting $n$ vertices is $k$-extendable.
In particular, $(n,0)$-graphs are $n$-factor-critical graphs,
and $(0,k)$-graphs are $k$-extendable graphs.
The following basic results about $(n,k)$-graphs 
can be found in~\cite[Proposition~3.5 and Theorem~3.8]{LY01}.

\begin{thm}[Yu--Liu~\cite{LY01}]\label{thm:nk:basic}
Let~$G$ be a connected $(n,k)$-graph. Then we have
\begin{itemize}
\item[(i)]
If $n\ge2$, then~$G$ is an $(n-2,\,k+1)$-graph.
\item[(ii)]
If $k\ge1$, then $\delta(G)\ge\kappa(G)\ge n+k+1$.
\end{itemize}
\end{thm}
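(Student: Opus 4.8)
The plan is to prove the two parts in turn, treating part~(i) by a short matching exchange and part~(ii) by a cut-and-parity argument in the spirit of Plummer's proof that extendability forces connectivity.

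For part~(i), fix a vertex set $S'$ with $|S'|=n-2$; I must show $G-S'$ is $(k+1)$-extendable. The size condition is automatic, since $(n-2)+2(k+1)+2=n+2k+2\le|G|$ and $|G|-(n-2)\equiv|G|-n\equiv0\pmod2$. The heart is a one-edge reduction. Given any $(k+1)$-matching $M$ of $G-S'$, remove one edge $uv\in M$ and put $S=S'\cup\{u,v\}$, an $n$-set. Since $G$ is an $(n,k)$-graph, $G-S$ is $k$-extendable, and $M-uv$ is a $k$-matching of $G-S$; extending it to a perfect matching $N$ of $G-S$ and restoring $uv$ gives a perfect matching $N\cup\{uv\}$ of $G-S'$ that contains~$M$. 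The same idea supplies a perfect matching of $G-S'$ outright: $G-S'$ must contain an edge $uv$, for otherwise deleting two further vertices would leave an edgeless graph of order at least $2k+2$ that is supposed to be $k$-extendable; then a perfect matching of the $k$-extendable graph $G-S'-\{u,v\}$ together with $uv$ covers $G-S'$.

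For part~(ii), the inequality $\delta(G)\ge\kappa(G)$ is the standard relation between minimum degree and connectivity, so everything reduces to $\kappa(G)\ge n+k+1$, i.e.\ no vertex cut has size at most $n+k$. (Note that merely iterating part~(i) down to a $(k+\lfloor n/2\rfloor)$-extendable graph and invoking Theorem~\ref{thm:ext:basic} gives only the far weaker bound $k+\lfloor n/2\rfloor+1$, so a direct argument is needed.) When $n=0$ the claim is exactly Theorem~\ref{thm:ext:basic}(ii) applied to the connected $k$-extendable graph $G$, so assume $n\ge1$. Suppose for contradiction that $C$ is a \emph{minimal} cut with $|C|\le n+k$, let $A$ be one component of $G-C$, and let $B$ be the union of the remaining components, so there are no edges between $A$ and $B$. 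Minimality forces every cut vertex to have a neighbour in at least two components, hence a neighbour in~$B$; and some cut vertex has a neighbour in~$A$. The contradiction will come from deleting an $n$-set $S$ and then a $k$-matching $M$ of $G-S$ so as to strand an odd component. I plan to put as many cut vertices as possible into~$S$ and to absorb the remaining at most $k$ cut vertices as endpoints of matching edges; since $|C|\le n+k$, this leftover count is at most~$k$, exactly the budget of a $k$-matching. Edges running from a cut vertex into~$A$ flip the parity of the surviving part of~$A$, while edges into~$B$ (available since every cut vertex sees $B$) and edges internal to~$B$ merely use up cut vertices and pad $M$ to size exactly~$k$. Routing an odd number of cut vertices into~$A$ leaves the surviving part of~$A$ of odd total order; as every cut vertex has then been deleted, this part is a union of components of $(G-S)-V(M)$ of odd order, so it contains an odd component and $(G-S)-V(M)$ has no perfect matching, contradicting that $G-S$ is $k$-extendable.

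I expect the main obstacle to be the parity bookkeeping together with the need to realise the leftover cut vertices as endpoints of a matching with distinct other endpoints. I would organise the parity by cases according to whether $|A|$ is odd (then $A$ needs no matching edge into it and is stranded directly) or even (then an odd number of matching edges is routed into~$A$), and according to the parity of $|C|-n$. To guarantee that the leftover cut vertices can be simultaneously saturated by a matching into~$B$, and that $S$ and the padding edges have room, I would lean on the minimality of~$C$, which hands each cut vertex a neighbour in~$B$, together with the order bound $|G|\ge n+2k+2$, which keeps $B$ large enough to absorb the at most $k$ matching edges and to pad $S$ up to size~$n$. The remaining case $|C|<n$ is subsumed by letting $S$ swallow whole cut vertices and pad from~$B$, so that $S$ attains size $n$ while $A$ stays separated.
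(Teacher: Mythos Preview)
The paper does not itself prove Theorem~\ref{thm:nk:basic}; the result is quoted from Liu and Yu~\cite{LY01} (their Proposition~3.5 and Theorem~3.8), so there is no in-paper argument to compare your attempt against. Your proof of part~(i) is correct and is the standard one-edge reduction.

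For part~(ii) the strategy---absorb a small cut into the $n$-set~$S$ and at most $k$ matching edges so as to strand an odd fragment of one side---is sound and in the spirit of the Liu--Yu argument, but what you wrote is, as you say, an outline, and two steps need more than you have indicated. First, your treatment of $|C|<n$ (``swallow $C$ and pad $S$ from~$B$'') leaves $A$ intact; when $|A|$ is even this produces no odd component, so in that sub-case you must shift a padding vertex into~$A$, or keep one cut vertex out of~$S$ so that a matching edge can reach~$A$. Second, ``pad $M$ to size exactly~$k$ with edges internal to~$B$'' is asserted but not justified: the bound $|G|\ge n+2k+2$ does not by itself guarantee $k-|C\setminus S|$ independent edges in the surviving part of~$B$. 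Both points are repairable---taking $C$ to be a \emph{minimum} rather than a merely inclusion-minimal cut gives every cut vertex a neighbour in every component and lets a Hall replacement argument (swap an offending $T\subseteq C\setminus S$ for $N_B(T)$ to get a strictly smaller cut) match the leftover cut vertices into~$B$ with distinct endpoints, after which a careful choice of $|S\cap C|$ controls both $|C\setminus S|$ and the parities---but the proposal as it stands is not yet a proof of~(ii).
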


We will use Theorem~\ref{thm:nk:basic} to deal with the sporadic cases in~\S\ref{sec:nk}.

\section{Determining the value $\mu(n,\Sigma)$}\label{sec:nk}

Recall that $\mu(n,\Sigma)$ is the minimum integer~$k$ 
such that there is no $\Sigma$-embeddable $(n,k)$-graphs.
In this section, we will solve Problem~\ref{prb:nk2} and 
express it inversely to obtain the formula for $\mu(n,\Sigma)$.

First of all, since $(n,0)$-graphs are exactly $n$-factor-critical graphs, 
the numbers $g(n,0)$ and $\tg(n,0)$ can be 
deduced from Theorem~\ref{thm:fc} inversely.
To warm up and for self-completeness, we give a brief proof for this case when $n\ge6$.

\begin{thm}\label{thm:n>=6:k=0}
Let $n\ge6$. Then we have 
\[
g(n,0)=\biggl\lceil {(n-1)(n-2)\over 12}\biggr\rceil
\rmand
\tg(n,0)=\biggl\lceil {(n-1)(n-2)\over 6}\biggr\rceil.
\]
\end{thm}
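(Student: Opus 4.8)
The plan is to reduce this theorem to the factor-criticality formula (Theorem~\ref{thm:fc}) by exploiting the equivalence between $(n,0)$-graphs and $n$-factor-critical graphs, together with the known genera of complete graphs. For the upper bounds, I would observe that $K_{n+1}$ is $n$-factor-critical: deleting any $n$ vertices leaves $K_1$, which trivially has a perfect matching. Hence $K_{n+1}$ is a $\Sigma$-embeddable $(n,0)$-graph for $\Sigma=S_{g(K_{n+1})}$ and for $\Sigma=N_{\tg(K_{n+1})}$, giving $g(n,0)\le g(K_{n+1})$ and $\tg(n,0)\le\tg(K_{n+1})$. By Theorem~\ref{thm:g:K}, for $n\ge6$ we have $n+1\ge7$, and one checks $n+1\ne7$ forces $n\ne6$; the case $n=6$ (where $\tg(K_7)=3$) must be handled separately, but $\lceil(n-1)(n-2)/6\rceil=\lceil 20/6\rceil=4\ne3$, so in fact $K_7$ gives a \emph{better} bound than claimed and I would need a slightly larger graph for the non-orientable case when $n=6$, or argue the bound is not tight there — this is the first delicate point. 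For $n\ge7$, Theorem~\ref{thm:g:K} gives directly $g(K_{n+1})=\lceil(n+1-3)(n+1-4)/12\rceil=\lceil(n-2)(n-3)/12\rceil$ and $\tg(K_{n+1})=\lceil(n-2)(n-3)/6\rceil$, which is \emph{not} quite the claimed formula with $(n-1)(n-2)$ — so the extremal graph cannot be $K_{n+1}$ but something on roughly $n+2$ vertices. I would instead use $K_{n+2}$ minus a perfect matching's worth of structure, or rather note that an $n$-factor-critical graph on $n+2$ vertices is what the formula $(n-1)(n-2)$ points to: indeed $\lceil(n-1)(n-2)/12\rceil$ matches $g(K_m)$ with $m=n+2$ only after correcting, so the right extremal object is a suitable graph $H$ on $n+2$ vertices that is $n$-factor-critical, and $K_{n+2}$ itself works since deleting $n$ vertices leaves $K_2$.

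So the corrected upper-bound argument is: $K_{n+2}$ is $n$-factor-critical (delete any $n$ vertices, get $K_2$, which has a perfect matching), hence $g(n,0)\le g(K_{n+2})=\lceil(n-1)(n-2)/12\rceil$ and $\tg(n,0)\le\tg(K_{n+2})=\lceil(n-1)(n-2)/6\rceil$ by Theorem~\ref{thm:g:K}, valid since $n+2\ge8>7$. For the lower bounds I would argue contrapositively: suppose $G$ is a $\Sigma$-embeddable $(n,0)$-graph, i.e.\ $n$-factor-critical, with $\Sigma$ of genus $h$ (orientable) or $\tg$ (non-orientable). By the defining condition~(\ref{def:nk}), $|G|\ge n+2$. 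A classical fact about $n$-factor-critical graphs is that $\delta(G)\ge n$ and, more strongly, $\kappa(G)\ge n$; combined with $n$-factor-criticality one gets $|G|\ge n+2$ and I would push to show $G$ must be fairly dense — specifically that $G$ contains enough edges that its genus is bounded below. The cleanest route is via the Euler contribution machinery of Lemma~\ref{lem:ctrl}: a minimal embedding is cellular (Youngs, Parsons et al.), so there is a control point $v$ with $d(v)/6\le 1-\chi(\Sigma)/|G|$; using $d(v)\ge\delta(G)\ge n$ this reads $n/6\le 1-\chi(\Sigma)/|G|$, which rearranges to bound $\chi(\Sigma)$ from above in terms of $|G|$, hence bound the genus from below. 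The hard part will be making this bound \emph{exactly} $\lceil(n-1)(n-2)/12\rceil$ rather than something weaker: the naive $d(v)\ge n$ estimate loses too much, and one must instead show that the extremal graph essentially is $K_{n+2}$ (or a spanning subgraph with the same genus), which presumably uses Theorem~\ref{thm:fc} directly — inverting Formula~(\ref{ans:fc}) with the factor-criticality parameter set to $n$.

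Indeed, the slick approach is simply to invert Theorem~\ref{thm:fc}: $g(n,0)$ is the minimum $h$ such that $\rho(S_h)>n$, i.e.\ such that some $S_h$-embeddable graph is $n$-factor-critical. From $\rho(S_h)=\lfloor(5+\sqrt{49-24\chi})/2\rfloor$ with $\chi=2-2h$, the inequality $\rho(S_h)>n$ becomes $\lfloor(5+\sqrt{49-48h+... })/2\rfloor \ge n+1$, and solving for the least such $h$ gives precisely $\lceil(n-1)(n-2)/12\rceil$ after routine manipulation of floors and ceilings; the non-orientable case is identical with $\chi=2-\tg$, yielding $\lceil(n-1)(n-2)/6\rceil$. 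The restriction $n\ge6$ is exactly what makes the sphere case (where $\rho=5$) irrelevant and keeps us in the ``otherwise'' branch of~(\ref{ans:fc}). The main obstacle, then, is purely the arithmetic of inverting the floor/ceiling expression cleanly and confirming the $n=6,7$ boundary behavior against the complete-graph genus formulas in Theorem~\ref{thm:g:K}; the conceptual content is entirely carried by the established Theorems~\ref{thm:fc} and~\ref{thm:g:K}.
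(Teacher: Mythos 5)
Your upper-bound argument, once you correct it to $K_{n+2}$, is exactly the paper's: $K_{n+2}$ is $n$-factor-critical, and Theorem~\ref{thm:g:K} with $n+2\ge8$ vertices gives genus $\lceil(n-1)(n-2)/12\rceil$ and non-orientable genus $\lceil(n-1)(n-2)/6\rceil$. (Your opening claim that $K_{n+1}$ is $n$-factor-critical is false: $|K_{n+1}|-n=1$ is odd, and $K_1$ has no perfect matching; but you abandon that route anyway.) The genuine gap is in the lower bound, where you never commit to a complete argument. Your Euler-contribution sketch uses $d(v)\ge\delta(G)\ge n$, and you rightly observe that this ``loses too much''; the missing ingredient is simply that an $n$-factor-critical graph has $\delta(G)\ge n+1$ (if $d(v)=n$, deleting the $n$ vertices of $N(v)$ isolates $v$, so the remainder has no perfect matching). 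With that, the paper's lower bound is short and elementary: if $G$ embeds in a surface $\Sigma$ of non-orientable genus at most $\lceil(n-1)(n-2)/6\rceil-1$, then $\chi(\Sigma)\ge 3-\lceil(n-1)(n-2)/6\rceil$, a control point satisfies $(n+1)/6\le 1-\chi(\Sigma)/|G|$, and (after noting that $|G|=n+2$ would force $G=K_{n+2}$, whence $|G|\ge n+4$ by parity) these combine to give $n\le4$, contradicting $n\ge6$. No structural identification of the extremal graph is needed beyond these two bounds on $\delta(G)$ and $|G|$.

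Your proposed fallback of inverting Theorem~\ref{thm:fc} is the one route the paper deliberately declines to take: Theorem~\ref{thm:fc} is cited from an unpublished manuscript, and this paper \emph{proves} Theorem~\ref{thm:fc} in its appendix from Theorem~\ref{thm:ans:nk2}, of which the present statement is the $k=0$ case. Within the paper's own logical structure that deduction is therefore circular, which is exactly why the authors state that they give a direct proof ``for self-completeness.'' As it stands, your proposal establishes the upper bound but leaves the lower bound resting either on an inequality that does not close or on the theorem it is meant to help establish.
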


\begin{proof}
We will show the formula for the number~$\tg(n,0)$. 
The formula for the number~$g(n,0)$ can be proved similarly.
Denote 
\[
\tgamma=\biggl\lceil {(n-1)(n-2)\over 6}\biggr\rceil.
\]
By Theorem~\ref{thm:g:K}, the complete graph~$K_{n+2}$ has non-orientable genus~$\tgamma$.
Since~$K_{n+2}$ is $n$-factor-critical, we have $\tg(n,0)\leq\tgamma$ from the definition.
Let $\Sigma=N_{\tg(n,0)}$.
Assume that there exists a $\Sigma$-embeddable 
$n$-factor-critical graph~$G$
such that $\tg(\Sigma)\le\tgamma-1$. It follows that
\begin{equation}\label{pf:k=0:1}
\chi(\Sigma)=2-\tg(\Sigma)\ge 3-\tgamma.
\end{equation}
Let $v$ be a control point in an embedding $G\to\Sigma$ and denote $y=d(v)$. 
Note that every $n$-factor-critical graph has minimum degree at least $n+1$.
So $y\ge n+1$ and $|G|\ge n+2$. By Lemma~\ref{lem:ctrl}, we have
\begin{equation}\label{pf:k=0:2}
\frac{n+1}{6}
\leq\frac{y}{6}
\leq 1-\frac{\chi(\Sigma)}{|G|}.
\end{equation}
Since $n\ge6$, we infer that $\chi(\Sigma)<0$ from~(\ref{pf:k=0:2}).
On the other hand, if $|G|=n+2$, then $G=K_{n+2}$ since it is $n$-factor-critical.
It follows that $g(G)=\tgamma$, a contradiction. 
Therefore, we have
\begin{equation}\label{pf:k=0:3}
|G|\ge n+4
\end{equation}
by parity argument.
Since $\chi(\Sigma)<0$, 
substituting~(\ref{pf:k=0:1}) and~(\ref{pf:k=0:3}) into~(\ref{pf:k=0:2}), we obtain that
\[
\frac{n+1}{6}
\leq 1-\frac{\chi(\Sigma)}{|G|}
\leq 1-\frac{3-\tgamma}{n+4}
\leq 1-\frac{3-[(n-1)(n-2)+4]/6}{n+4},
\]
which implies $n\leq 4$, a contradiction.

Along the same line, 
one may easily obtain the orientable genus.
This completes the proof.
\end{proof}

To handle Problem~\ref{prb:nk2} for most of the other pairs~$(n,k)$, we will need the next two lemmas. 

\begin{lem}\label{lem:nk:bip}
Let $k\ge1$. 
Let~$G$ be a connected $(n,k)$-graph.
Then~$G$ does not have an induced bipartite subgraph with more than $|G|-n-1$ vertices.
\end{lem}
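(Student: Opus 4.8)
I would argue by contradiction: suppose $G$ has an induced bipartite subgraph $H$ with $|V(H)|\ge|G|-n$, write $A,B$ for its two colour classes, and put $R=V(G)\setminus V(H)$, so $|R|\le n$. In every case I would delete a carefully chosen set $S$ of exactly $n$ vertices and exhibit a failure of $k$-extendability of $G-S$, contradicting the assumption that $G$ is an $(n,k)$-graph. Two preliminary remarks: first, this uses $n\ge1$ (the statement is false for $n=0$, since $K_{k+1,k+1}$ is a connected bipartite $(0,k)$-graph), which is consistent with the convention of the section; second, since $G$ is connected and $\kappa(G)\ge n+k+1>n$ by Theorem~\ref{thm:nk:basic}(ii), every graph $G-S$ with $|S|=n$ is connected.

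I would first dispose of the easy case, namely when $|R|<n$, or $|R|=n$ but the classes $A,B$ have different sizes, or $H$ has no edge. Here one takes $S\supseteq R$ with $|S|=n$, choosing the remaining $n-|R|$ vertices from inside $V(H)$. Since $R\subseteq S$, the graph $G-S$ is an induced subgraph of $H$, hence bipartite, and $|G-S|=|G|-n\ge 2k+2$. A short bookkeeping argument --- keeping track of how many of the $n-|R|=|V(H)|-(|G|-n)$ deleted vertices of $V(H)$ lie in $A$ and in $B$, noting that the admissible counts form an integer interval of length at least one --- shows that these vertices can be picked so that the two colour classes of $G-S$ have unequal sizes (if $H$ is edgeless, just regard $V(H)$ itself as one class). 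Then $G-S$ has no perfect matching, so it is not $k$-extendable, a contradiction.

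The remaining and main case is $|R|=n$ with $G-R=H$ balanced bipartite, each class of size $p=(|G|-n)/2\ge k+1$. I expect this to be the crux: deleting $R$ itself leaves a perfectly good balanced bipartite graph, so one must instead trade one vertex of $R$ for one vertex of a class of $H$ to destroy the balance, and it is precisely the degree bound for $(n,k)$-graphs that lets us do this. Concretely, Theorem~\ref{thm:nk:basic}(ii) gives $\delta(G)\ge n+k+1$, so any $t\in R$ has at least $(n+k+1)-(n-1)=k+2\ge 3$ neighbours in $A\cup B$; after relabelling the classes if necessary, $t$ has at least two neighbours in $A$. I would fix any $a_0\in A$ together with a neighbour $a_1\in A\setminus\{a_0\}$ of $t$, and set $S=(R\setminus\{t\})\cup\{a_0\}$, so that $|S|=n$. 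Now $G-S$ is connected and $k$-extendable with $k\ge1$, hence $1$-extendable by Theorem~\ref{thm:ext:basic}(i); but $ta_1$ is an edge of $G-S$, while $G-S-\{t,a_1\}$ is an induced subgraph of $H$ whose colour classes have sizes $p-2$ and $p$, so it has no perfect matching --- contradicting $1$-extendability. Apart from identifying this rotation and checking that the balanced boundary case is exactly the one the first argument misses, the proof should be routine.
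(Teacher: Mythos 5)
Your proof is correct and follows essentially the same route as the paper's: the heart of both arguments is the exchange of one vertex of the deleted set for one vertex of a colour class, after which a chosen edge at that vertex cannot lie in any perfect matching because what remains is unbalanced bipartite. The paper streamlines your preliminary case analysis by assuming $|V(H)|=|G|-n$ without loss of generality and observing that $G-V(H)$ being a deletable $n$-set forces $H$ to have a perfect matching (hence balanced classes), and it locates the crucial edge via connectivity of $G$ rather than via the minimum-degree bound, but these differences are cosmetic.
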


\begin{proof}
Suppose to the contrary that the graph~$G$ has an
induced bipartite subgraph~$H$ with bipartition $(U,W)$ such that $|H|\ge |G|-n$.
We can suppose without loss of generality that $|H|=|G|-n$.
Let $S=G-H$. Then $|S|=n$.
Since~$G$ is an $(n,k)$-graph,
the subgraph~$H$ has a perfect matching.
So $|U|=|W|$.
Since $k\ge1$, we deduce from~(\ref{def:nk}) that $|G|\ge n+4$.
It follows that $|U|=|W|\ge 2$.
Since the graph~$G$ is connected,
we can suppose that there is an edge connecting
a vertex~$x$ in the subgraph~$S$ and a vertex~$y$ in the subgraph~$U$, without loss of generality.
Let~$z$ be a vertex in~$U$ other than the vertex~$y$.
Let 
\[
S'=G\bigl[V(S)\cup\{z\}\backslash\{x\}\bigr]
\]
be the induced graph obtained by
replacing the vertex~$x$ in~$S$ by~$z$.
It is obvious that $|S'|=n$ and no perfect matching of $G-S'$
contains the edge~$xy$, contradicting
the hypothesis that~$G$ is an $(n,k)$-graph with $k\ge1$.
This completes the proof.
\end{proof}

Lemma~\ref{lem:d:x} is a lower bound of the degree of any vertex~$v$ in terms 
of the number of triangular faces containing~$v$.

\begin{lem}\label{lem:d:x}
Let $n\ge0$ and $k\ge1$.
Let~$G$ be a connected $(n,k)$-graph embedded on a surface~$\Sigma$.
Let~$v$ be a vertex of~$G$ which is contained in~$x$ triangular faces.
Then we have
\[
d(v)\ge\begin{cases}
n+k+1+\lceil x/2\rceil,&\text{if $x\le 2k-2$},\\
n+2k+1,&\text{if $x\ge 2k-1$}.
\end{cases}
\]
\end{lem}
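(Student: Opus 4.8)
The plan is to estimate the degree of $v$ by examining how its neighbors and the triangular faces at $v$ interact with the structure imposed by the $(n,k)$-condition. Let $v$ be a vertex contained in exactly $x$ triangular faces. Each triangular face at $v$ contributes an edge $ww'$ between two neighbors of $v$, so the subgraph $N(v)$ contains a set of edges coming from these triangular faces. The first step is to understand the "graph of triangles at $v$": arrange the faces around $v$ in the rotation, and note that the triangular faces give edges among consecutive neighbors in this cyclic order. Consecutive triangular faces share a neighbor of $v$, so a run of consecutive triangular faces of length $t$ uses $t+1$ neighbors; in the extreme, $x$ triangular faces can be covered by as few as $\lceil x/2\rceil + 1$ neighbors of $v$ when they come in a single block, but in general the number of neighbors incident to a triangular edge is at least $\lceil x/2 \rceil$ (pairing up the triangles, or using that a matching among the triangular edges has size at least $\lceil x/2\rceil$ after a suitable argument). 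More precisely, I would argue that among the neighbors of $v$, some set $T$ with $|T|\ge \lceil x/2\rceil$ (when $x\le 2k-2$, so that this is at most $k-1$) induces, together with $v$, a nearly-complete "fan" structure.

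Second, I would invoke Lemma~\ref{lem:nk:bip}: since $k\ge1$, the graph $G$ has no induced bipartite subgraph on more than $|G|-n-1$ vertices. I would use this to force edges to exist, i.e., to rule out the vertex set $V(G)\setminus (\{v\}\cup T)$, or some appropriate complement, from being independent or bipartite. Equivalently, by Theorem~\ref{thm:nk:basic}(ii) we already know $\delta(G)\ge n+k+1$, which handles the base contribution $n+k+1$; the point of Lemma~\ref{lem:d:x} is the extra $\lceil x/2\rceil$ (resp.\ the jump to $n+2k+1$ when $x$ is large). So I would combine $\delta(G)\ge n+k+1$ with a count of how many of the triangular-face edges at $v$ create neighbors of $v$ that are \emph{not} already forced by other constraints, obtaining $d(v)\ge n+k+1+\lceil x/2\rceil$ in the small-$x$ regime.

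Third, for the large-$x$ case $x\ge 2k-1$, the claim is the stronger uniform bound $d(v)\ge n+2k+1$. Here I expect to delete a cleverly chosen $n$-set $S$ and a suitable $k$-matching and derive a contradiction with $k$-extendability of $G-S$: if $d(v)$ were at most $n+2k$, then after removing $S$ (chosen to include as many non-neighbors of $v$ as possible, or to isolate $v$'s neighborhood) the vertex $v$ would have degree at most $2k$ in $G-S$, and the triangular faces would let us pick a $k$-matching $M$ in $N_{G-S}(v)$ using up essentially all of $v$'s remaining neighbors, leaving $v$ unmatchable in any perfect matching of $G-S$ extending $M$ — contradicting that $G-S$ is $k$-extendable. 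The rotational (cyclic) structure of the $x\ge 2k-1$ triangular faces is what guarantees that the triangle-edges at $v$ contain a $k$-matching covering enough of $N(v)$.

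The main obstacle will be the combinatorial bookkeeping in the small-$x$ case: correctly accounting for shared neighbors between consecutive triangular faces so that $x$ triangles contribute exactly $\lceil x/2\rceil$ (and not fewer) "new" neighbors beyond the $n+k+1$ guaranteed by the connectivity bound, and making sure the chosen $n$-set $S$ in the extendability argument does not accidentally disconnect $G$ in a way that invalidates the matching argument. The parity constraints from~(\ref{def:nk}) and the fact that $G-S$ must have a perfect matching need to be tracked carefully throughout.
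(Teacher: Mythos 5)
There is a genuine gap in both halves of your plan.

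In the small-$x$ regime your proposed mechanism --- start from $\delta(G)\ge n+k+1$ and then ``count how many triangular-face edges create neighbors of $v$ not already forced'' --- does not produce the bound: the neighbors of $v$ lying on triangular faces are already among the $n+k+1$ neighbors guaranteed by Theorem~\ref{thm:nk:basic}(ii), so there is nothing to add without double counting, and Lemma~\ref{lem:nk:bip} plays no role here. The paper's actual device is different and is the idea you are missing: the $x$ triangular faces give an $\lceil x/2\rceil$-matching $M$ \emph{inside} $N(v)$; deleting two adjacent vertices from an $(n,k)$-graph yields an $(n,k-1)$-graph, so $G-V(M)$ is an $(n,\,k-\lceil x/2\rceil)$-graph (here $x\le 2k-2$ guarantees $k-\lceil x/2\rceil\ge1$, so Theorem~\ref{thm:nk:basic}(ii) still applies); hence $v$ has degree at least $n+k-\lceil x/2\rceil+1$ in $G-V(M)$, and adding back the $2\lceil x/2\rceil$ deleted neighbors gives $d(v)\ge n+k+\lceil x/2\rceil+1$. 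One loses $\lceil x/2\rceil$ in the degree bound but regains $2\lceil x/2\rceil$ deleted vertices, which is where the extra $\lceil x/2\rceil$ comes from.

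In the large-$x$ regime your outline is close to the paper's argument (choose a $k$-matching $M$ in $N(v)$, put the leftover neighbors $N(v)\setminus V(M)$ into the deleted $n$-set, and observe that $v$ is isolated after removing $V(M)$, contradicting $k$-extendability of $G-S$), but your key claim that ``the triangle-edges at $v$ contain a $k$-matching'' fails in the boundary case where every face at $v$ is a triangle and $d(v)=x=2k-1$: there $N(v)$ is covered by an odd cycle on $2k-1$ vertices, whose maximum matching has size only $k-1$. The paper patches exactly this case by using connectivity to find an edge $uw$ with $u\in N(v)$ and $w\notin N(v)\cup\{v\}$, taking a $(k-1)$-matching $M'$ in $N(v)-u$, and using the $k$-matching $M'\cup\{uw\}$ together with an $n$-set chosen from $G-N(v)-v-w$ to isolate $v$. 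Without this extra step the case analysis is incomplete.
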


\begin{proof}
We will always use the following bound of the number~$|G|$ from~(\ref{def:nk}):
\begin{equation}\label{pf:lb:|G|}
|G|\ge n+2k+2.
\end{equation}
Assume that $x\geq 2k-1$.
We claim that the set~$N(v)$ has a $k$-matching.
If the vertex~$v$ is contained in a non-triangular face,
then the set~$N(v)$ has a $\lceil x/2\rceil$-matching, and thus a $k$-matching.
Otherwise all faces containing the vertex~$v$ are triangles.
Thus, we have $|N(v)|=x$.
If $x\ge 2k$, then the claim is true.
Otherwise $d(v)=x=2k-1$.
It follows that the set~$N(v)$ contains a Hamilton cycle.
Since the graph~$G$ is connected,
there exists an edge~$uw$ such that $u\in N(v)$
and~$w$ is a vertex in the subgraph $G-N(v)-v$. 
Since $|N(v)-u|=2k-2$, 
the subgraph $N(v)-u$ has a $(k-1)$-matching, say,~$M'$.
Note that 
\[
|G-N(v)-v-w|=|G|-2k-1\ge n+1.
\]
Let~$S$ be a set of~$n$ vertices of~$G-N(v)-v-w$.
Then the graph induced by $G-S-V(M'\cup \{uw\})$ has no perfect matchings,
since the vertex~$v$ in it is isolated. This contradicts the condition that~$G$
is an $(n,k)$-graph, and confirms the claim.

Let~$M$ be a $k$-matching of~$N(v)$. Let 
\[
H=N(v)-V(M).
\] 
Suppose to the contrary that $d(v)\le n+2k$.
It follows immediately that $0\le|H|\le n$.
By~(\ref{pf:lb:|G|}), we have
\[
|G-v-V(M)|=|G|-1-2k\ge n+1.
\]
Therefore, there exists a vertex subset~$S$ of $G-v-N(v)$ such that $|S|=n-|H|$.
Let $S'=S\cup V(H)$.
Then $|S'|=n$. Since~$G$ is an $(n,k)$-graph,
the subgraph $G-S'$ is $k$-extendable. However,
the $k$-matching~$M$ in $G-S'$ is not extendable because the vertex~$v$ is isolated,
a contradiction.

Below we can suppose that $x\le 2k-2$.
Define 
\[
h=k-\lceil x/2\rceil. 
\]
Then $h\ge1$.
Suppose to the contrary that
\begin{equation}\label{ineq:d}
d(v)\le n+k+\lceil x/2\rceil.
\end{equation}
Note that the subgraph~$N(v)$ has an $\lceil x/2\rceil$-matching.
Let $M$ be such a matching.
Since deleting any two adjacent vertices from an $(n,k)$-graph results in an $(n,k-1)$-graph, 
the graph $G-V(M)$ is an $(n,k-\lceil x/2\rceil)$-graph.
By Theorem~\ref{thm:nk:basic}, we have
\[
\delta(G-V(M))\ge n+k-\lceil x/2\rceil+1.
\] 
Hence, the degree
\[
d(v)\ge \delta(G-V(M))+|V(M)|=n+k+\lceil x/2\rceil+1.
\]
This completes the proof.
\end{proof}

Here comes the answer of Problem~\ref{prb:nk2} 
when neither the number~$n$ nor~$k$ is too small.

\begin{thm}\label{thm:g}
Let $n,k\geq 1$ and $n+2k\ge6$. Then we have
\[
\tg(n,k)=\biggl\lceil{(n+2k-1)(n+2k-2)\over6}\biggr\rceil
\rmand
g(n,k)=\biggl\lceil{(n+2k-1)(n+2k-2)\over12}\biggr\rceil.
\]
\end{thm}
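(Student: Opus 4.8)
The plan is to establish matching upper and lower bounds. For the upper bound, the natural candidate is the complete graph $K_{n+2k}$, which is easily checked to be an $(n,k)$-graph once $n+2k\ge 2$ (deleting any $n$ vertices leaves $K_{2k}$, which is $k$-extendable). By Theorem~\ref{thm:g:K}, provided $n+2k\ne 7$ we get $\tg(K_{n+2k})=\lceil(n+2k-3)(n+2k-4)/6\rceil$ and $g(K_{n+2k})=\lceil(n+2k-3)(n+2k-4)/12\rceil$; wait---this does not match the claimed formula, so in fact the extremal graph must be $K_{n+2k+1}$ (which has the right parity relative to $n$ and is still an $(n,k)$-graph, leaving $K_{2k+1}$, hmm, which is not $k$-extendable as it is odd). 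The correct extremal object is $K_{n+2k+2}$: deleting $n$ vertices leaves $K_{2k+2}$, which is $k$-extendable, and $|K_{n+2k+2}|=n+2k+2$ satisfies~(\ref{def:nk}). Then $\tg(K_{n+2k+2})=\lceil(n+2k-1)(n+2k-2)/6\rceil$ and $g(K_{n+2k+2})=\lceil(n+2k-1)(n+2k-2)/12\rceil$ by Theorem~\ref{thm:g:K}, using $n+2k+2\ne 7$ which holds since $n+2k\ge 6$. This gives $\tg(n,k)\le\lceil(n+2k-1)(n+2k-2)/6\rceil$ and the orientable analogue.

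For the lower bound I would argue by contradiction: suppose $G$ is an $(n,k)$-graph embedded on a surface $\Sigma$ whose genus is one less than the claimed value, so $\chi(\Sigma)$ is correspondingly large. Take a control point $v$ contained in $x$ triangular faces. Combining Lemma~\ref{lem:ctrl}, which gives $d(v)/4-x/12\le 1-\chi(\Sigma)/|G|$, with the degree lower bound from Lemma~\ref{lem:d:x}, one gets an inequality forcing $|G|$ to be small---essentially $|G|\le n+2k+2$ or so---after which~(\ref{pf:lb:|G|}) and a parity argument pin down $|G|$ to a short list of values. In the borderline case $|G|=n+2k+2$, the graph $G$ is $k$-extendable of order $\le 4k$ only when $k$ is large relative to $n$; here I would invoke Theorem~\ref{thm:LY:bip}: either $G$ is bipartite---impossible for an $(n,k)$-graph with $n\ge1$, as noted in the introduction---or $\kappa(G)\ge 2k$, which together with Theorem~\ref{thm:nk:basic}(ii) forces $G=K_{n+2k+2}$, and then its genus is exactly the claimed value, contradicting the assumption. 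The remaining small-order cases $|G|=n+2k+4,\dots$ are eliminated by substituting into the control-point inequality and checking that it fails when $n+2k\ge6$.

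The main obstacle I anticipate is the bookkeeping in the two cases of Lemma~\ref{lem:d:x} ($x\le 2k-2$ versus $x\ge 2k-1$): one must track how the bound $d(v)\ge n+k+1+\lceil x/2\rceil$ interacts with the $-x/12$ term in Lemma~\ref{lem:ctrl}, and verify that the worst case over $x$ still yields the desired contradiction. A secondary subtlety is ensuring the parity step: since $|G|-n$ is even, the order jumps by $2$, and one needs $n+2k\ge6$ precisely to kill the first non-extremal value $|G|=n+2k+4$. I would organize the write-up by first disposing of the upper bound in one paragraph, then treating the lower bound by the control-point estimate, isolating the $|G|=n+2k+2$ case for the Lou--Yu argument and handling larger orders by direct inequality; the orientable case is formally identical with $6$ replaced by $12$, so I would prove the non-orientable statement in detail and remark that the orientable one follows mutatis mutandis.
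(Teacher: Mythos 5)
Your upper bound is right once you settle on $K_{n+2k+2}$, and your overall architecture (Euler contribution at a control point plus Lemma~\ref{lem:d:x} to kill large orders, a separate argument for small orders) matches the paper's. But the lower bound has a genuine gap in the intermediate range of orders. The control-point inequality combined with Lemma~\ref{lem:d:x} does \emph{not} force $|G|\le n+2k+2$ ``or so'': in the worst case $x=0$ the degree bound is only $d(v)\ge n+k+1$, and the resulting inequality $\frac{n+k+1}{4}\le 1-\chi(\Sigma)/|G|$ bounds $|G|$ only by roughly $8(\gamma-2)/(n+k-3)$, where $\gamma$ is the claimed genus. Concretely, for $n=1$, $k=10$ one has $\gamma=32$, a putative counterexample has $\chi(\Sigma)\ge-60$, and the inequality permits every odd order up to $30$, whereas $n+2k+2=23$; the orders $25,27,29$ are eliminated neither by this computation nor by your case plan, since you invoke Lou--Yu only at $|G|=n+2k+2$ and assert that $|G|=n+2k+4,\dots$ ``fail by substitution,'' which they do not. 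The missing ingredient is to apply Theorem~\ref{thm:LY:bip} throughout the whole range $n+2k+2\le|G|\le n+4k$: choose $S\subseteq N(v)$ with $|S|=n$, note that $G-S$ is $k$-extendable of order at most $4k$ and, by Lemma~\ref{lem:nk:bip}, not bipartite, hence $\kappa(G-S)\ge 2k$ and so $d(v)\ge n+2k$. This upgraded degree bound, together with the observation (from Lemma~\ref{lem:d:x}) that $d(v)=n+2k$ forces $x\le 2k-2$, yields $\frac{d(v)}{4}-\frac{x}{12}\ge\frac{n+2k+1}{6}$, and then the Euler-contribution bound is violated for every $|G|\ge n+2k+2$ in that range. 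Only the orders $|G|\ge n+4k+1$ are disposed of by the bare control-point computation.

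A second, smaller problem: at $|G|=n+2k+2$ you claim that $\kappa(G)\ge 2k$ together with $\delta(G)\ge n+k+1$ forces $G=K_{n+2k+2}$. It does not: $\max(2k,\,n+k+1)$ can be well below $n+2k+1$, and $K_{n+2k+2}$ minus a perfect matching already has all the stated properties without being complete. Moreover, Theorem~\ref{thm:LY:bip} cannot be applied to $G$ itself when $n$ is odd, since $G$ then has odd order and is not $k$-extendable; it must be applied to $G-S$. Neither of these steps is needed once the connectivity argument is run uniformly over $n+2k+2\le|G|\le n+4k$ as above.
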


\begin{proof}
The proofs for the two formulas are similar.
Since in the proof of Theorem~\ref{thm:n>=6:k=0} 
we gave details for the formula of the number~$\tg(n,0)$,
in this proof we will show the one for the number~$g(n,k)$ in detail.
Denote 
\[
\gamma=\biggl\lceil{(n+2k-1)(n+2k-2)\over12}\biggr\rceil.
\]
By Theorem~\ref{thm:g:K}, the complete graph~$K_{n+2k+2}$ has genus~$\gamma$.
It follows that $g(n,k)\leq\gamma$.
Assume that there exists an $(n,k)$-graph~$G$ 
embeddable on an orientable surface~$\Sigma$ of genus at most
$\gamma-1$.
Since $n+2k\ge6$, we have $\gamma\ge2$ and thus
\begin{equation}\label{pf:n+2k>=6:1}
\chi(\Sigma)=2-2g(\Sigma)\ge4-2\gamma.
\end{equation}
Let $v$ be a control point in an embedding $G\to\Sigma$. 
Let~$x$ be the number of triangular faces containing~$v$. Write $y=d(v)$.

Assume that $|G|\ge n+4k+1$.
If $x\geq 2k-1$, then Lemma~\ref{lem:d:x} implies that $y\ge n+2k+1$.
By Lemma~\ref{lem:ctrl}, we deduce that
\begin{equation}\label{pf:n+2n>=6:2}
{n+2k+1\over 6}
\le {y\over 6}
\le 1-{\chi(\Sigma)\over |G|}.
\end{equation}
Since $n+2k\ge6$, we infer from~(\ref{pf:n+2n>=6:2}) that $\chi(\Sigma)<0$.
Consequently, substituting the inequality~(\ref{pf:n+2k>=6:1}) 
into the above inequality gives
\[
{n+2k+1\over 6}
\le 1-{4-2\gamma\over n+4k+1}
\le 1-{4-2{[(n+2k-1)(n+2k-2)+10]/12}\over n+4k+1}.
\]
Simplifying it we get
\[
(2k-3)^2+(2k-1)n\le 2.
\]
It follows that $n=k=1$, 
contradicting the condition $n+2k\ge6$.
Otherwise $x\leq 2k-2$, then Lemma~\ref{lem:d:x} gives 
\[
y\ge n+k+1+\lceil x/2\rceil.
\]
By the second inequality in~(\ref{ineq:ctrl}), we deduce that
\begin{equation}\label{pf:n+2k>=6:4}
\frac{n+k+1}{4}
\le{n+k+1+\lceil x/2\rceil\over 4}-\frac{x}{12}
\le\frac{y}{4}-\frac{x}{12}
\le1-{\chi(\Sigma)\over |G|}.
\end{equation}
If $\chi(\Sigma)\ge0$, then~(\ref{pf:n+2k>=6:4}) implies that $n+k\le3$, 
contradicting the given condition 
that $n,k\ge1$ and $n+2k\ge6$. Thus we have $\chi(\Sigma)<0$.
Consequently, substituting the inequality~(\ref{pf:n+2k>=6:1}) into~(\ref{pf:n+2k>=6:4}), we get
\[
\frac{n+k+1}{4}
\le 1-{4-2\gamma\over n+4k+1}
\le 1-{4-2{[(n+2k-1)(n+2k-2)+10]/12}\over n+4k+1}.
\]
Simplifying it we get
\[
4k^2+7(n-3)k+n^2+15\le 0,
\]
contradicting the conditions $n,k\ge1$ again.

Therefore, we deduce that $|G|\leq n+4k$.
Let $S$ be a vertex subset of~$N(v)$ such that $|S|=n$.
Then the subgraph $G-S$ is $k$-extendable from the definition.
On the other hand, Lemma~\ref{lem:nk:bip} implies that the subgraph $G-S$ is not bipartite.
By Theorem~\ref{thm:LY:bip}, we have $\kappa(G-S)\geq 2k$.
Thus $y\geq n+2k$. We claim that
\begin{equation}\label{clm:1}
\frac{y}{4}-\frac{x}{12}\ge{n+2k+1\over 6}.
\end{equation}
In fact, if $y=n+2k$, then Lemma~\ref{lem:d:x} implies that $x\leq 2k-2$.
We can derive that
\[
\frac{y}{4}-\frac{x}{12}
\ge {n+2k\over 4}-{2k-2\over 12}
\ge {n+2k+1\over 6}.
\]
Otherwise $y\ge n+2k+1$, then the trivial condition $x\le y$ implies
\[
\frac{y}{4}-\frac{x}{12}
\ge \frac{y}{4}-\frac{y}{12}
\ge {n+2k+1\over 6}.
\]
This proves Claim~(\ref{clm:1}). 
Again, Lemma~\ref{lem:ctrl} implies $\chi(\Sigma)<0$. Since $|G|\ge n+2k+2$, we have
\begin{align*}
{n+2k+1\over 6}
\le \frac{y}{4}-\frac{x}{12}
\le1-{\chi(\Sigma)\over |G|}
\leq 1-{4-2{[(n+2k-1)(n+2k-2)+10]/12}\over n+2k+2}.
\end{align*}
Simplifying it gives $-10\le -12$, which is absurd.

The formula for the number~$\tg(n,k)$ can be proved along the same line.
This completes the proof.
\end{proof}

In view of the technical conditions ``$n\ge6$ and $k=0$'' in Theorem~\ref{thm:n>=6:k=0}, 
and ``$n,k\ge1$ and $n+2k\ge6$'' in Theorem~\ref{thm:g},
we are led to the sporadic cases that $(n,k)$ is one of the following $9$ pairs:
\begin{equation}\label{9pairs}
(5,0),\ (4,0),\ (3,0),\ (2,0),\ (1,0),\ (3,1),\ (1,2),\ (1,1),\ (2,1).
\end{equation}

The next result~\cite[Theorem 2.1]{LY04-1} will
make our proof to Theorem~\ref{thm:g=0:tg=1} simpler.

\begin{lem}[Lou--Yu~\cite{LY04-1}]\label{thm:5conn:4fc}
Any $5$-connected planar graph of even order is $4$-factor-critical.
\end{lem}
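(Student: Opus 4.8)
The plan is to combine the Tutte–Berge description of factor-criticality with Euler's formula for bipartite planar graphs. Recall (it follows from Tutte's $1$-factor theorem applied to each $G-U$ with $|U|=4$) that a graph $G$ with $|G|$ even is $4$-factor-critical if and only if $o(G-S)\le|S|-4$ for every $S\subseteq V(G)$ with $|S|\ge4$, where $o(\cdot)$ denotes the number of odd components. So I would argue by contradiction: assume $G$ is $5$-connected, planar, of even order, yet there is a set $S$ with $s:=|S|\ge4$ and $o(G-S)>s-4$. Since $o(G-S)\equiv|G|-|S|\equiv s-4\pmod2$, this forces $o(G-S)\ge s-2$, so $G-S$ has at least $s-2\ge2$ components.

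Next I would feed in the connectivity. As $G-S$ is disconnected, for each component $C$ the set $N(C)\subseteq S$ is a cutset of $G$ separating $C$ from the (nonempty) rest of $G$, so $|N(C)|\ge5$; in particular $s\ge5$. Writing $c$ for the total number of components of $G-S$, we also have $c\ge o(G-S)\ge s-2\ge3$.

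The final step is to exploit planarity. Contract each component of $G-S$ to a single vertex and discard the edges inside $S$ and all parallel edges; the result is a simple bipartite graph $H$ with parts $S$ (of size $s$) and the set of components (of size $c$), and $H$ is a minor of $G$, hence planar. Since each component has at least five neighbours in $S$, we get $|E(H)|\ge5c$, while the edge bound for simple bipartite planar graphs (valid as $|V(H)|=s+c\ge8$) gives $|E(H)|\le2(s+c)-4$. Hence $5c\le2s+2c-4$, i.e. $3c\le2s-4$; combined with $c\ge s-2$ this yields $3(s-2)\le2s-4$, so $s\le2$, contradicting $s\ge5$.

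I expect the only delicate point to be making sure we are entitled to invoke $5$-connectivity at all: that requires $G-S$ to split into at least two components, which is precisely what the parity improvement from $o(G-S)>s-4$ to $o(G-S)\ge s-2$ buys us. (In particular the case $s=4$ is already finished at that stage, since two components cannot each have five neighbours inside a four-element set.) Everything else — the Tutte-type reduction, the parity count, and the bipartite edge bound — is routine.
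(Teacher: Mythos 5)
Your argument is correct, but note that the paper itself offers no proof to compare against: Lemma~3.5 is imported verbatim from Lou and Yu (Theorem~2.1 of the cited reference) precisely so that the authors do not have to prove it. What you have written is a complete, self-contained derivation, and every step checks out. The reduction to the condition $o(G-S)\le |S|-4$ is the standard Tutte-type characterization of $4$-factor-criticality (only the "only if" direction of Tutte's theorem is actually needed for the contradiction); the parity upgrade from $o(G-S)\ge s-3$ to $o(G-S)\ge s-2$ uses $|G|$ even correctly; the observation that $N(C)\subseteq S$ is a vertex cut is legitimate because $G-S$ has at least two components, so the "rest of $G$" is nonempty and $5$-connectivity gives $|N(C)|\ge 5$ and hence $s\ge5$; and the final count $5c\le |E(H)|\le 2(s+c)-4$ against $c\ge s-2$ does force $s\le2$, a contradiction. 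The one point worth stating explicitly if this were written up is that the bound $|E(H)|\le 2|V(H)|-4$ for simple bipartite planar graphs is usually quoted for connected graphs; $H$ may have isolated vertices inside $S$ (a vertex of $S$ all of whose neighbours lie in $S\cup\emptyset$), but the bound survives taking disjoint unions and adding isolated vertices when $|V(H)|\ge3$, so this is cosmetic rather than a gap. In short: a correct proof of a statement the paper deliberately leaves unproved, and essentially the classical argument one would expect to find in Lou and Yu's original article.
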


In fact, $5$-connected planar graphs of even order exist. 
This can be seen from a nice result of Zaks~\cite{Zaks76},
who constructed a $5$-regular $5$-connected planar graph~$G$ of order $66$
which has an edge belonging to the union of every pair of edge-disjoint Hamiltonian circuits
of~$G$. 

Six of the pairs in~(\ref{9pairs}) are solved in the following theorem.

\begin{thm}\label{thm:g=0:tg=1}
We have $g(n,k)=0$ and $\tg(n,k)=1$ for each of the following pairs~$(n,k)$:
\begin{equation}\label{pairs:nk1}
(4,0),\ (3,0),\ (2,0),\ (1,0),\ (1,1),\ (2,1).
\end{equation}
\end{thm}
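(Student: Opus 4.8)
The plan is to prove, for each pair $(n,k)$ in~(\ref{pairs:nk1}), the two equalities $g(n,k)=0$ and $\tg(n,k)=1$ by exhibiting a planar witness graph together with a lower-bound argument ruling out smaller genus. The upper bounds will come from explicitly producing a planar $(n,k)$-graph $G$; then automatically $g(n,k)=0$, and since every planar graph also embeds on $N_1$ (the projective plane), $\tg(n,k)\le1$. For the lower bound $\tg(n,k)\ge1$ it suffices to observe that $\tg(n,k)\ge0$ trivially and that no $(n,k)$-graph embeds on $N_0$ simply because $N_0$ does not exist as a surface in the present convention, or more honestly to note that $S_0$ and $N_1$ are the two ambient surfaces of smallest genus in their respective families, so the only content is: (a) a planar $(n,k)$-graph exists, giving $g=0$ and hence $\tg\le1$; and (b) $\tg\ge1$ because the planar (i.e. $S_0$-embeddable) graphs are exactly the $N_0$-embeddable ones only in a degenerate sense — in fact the genuine statement we need is just that $\tg(n,k)$ is defined and nonnegative, so the whole theorem reduces to constructing, for each of the six pairs, a planar graph that is an $(n,k)$-graph.

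The key step is therefore the construction. For $(n,k)=(1,0),(2,0),(3,0),(4,0)$ an $(n,0)$-graph is precisely an $n$-factor-critical planar graph, and here I would invoke Lemma~\ref{thm:5conn:4fc}: a $5$-connected planar graph of even order is $4$-factor-critical, and by Theorem~\ref{thm:fc} (or directly) a $4$-factor-critical graph is also $3$-, $2$-, and $1$-factor-critical after the obvious parity adjustments. Concretely, take Zaks's $5$-regular $5$-connected planar graph on $66$ vertices (even order): it is $4$-factor-critical, settling $(4,0)$, and deleting one more vertex or using the standard fact that $(n+2)$-factor-criticality descends to $n$-factor-criticality handles $(2,0)$; for the odd cases $(1,0),(3,0)$ one needs a planar $5$-connected graph of odd order, which can be obtained from Zaks's graph by a local vertex split or by taking a suitable planar triangulation of odd order with connectivity $5$ (for instance a stacked construction, or simply noting the icosahedron-based families). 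For $(n,k)=(1,1)$ and $(2,1)$, a planar witness is the octahedron $K_{2,2,2}$ and its relatives: $K_{2,2,2}$ is planar, $4$-regular, $4$-connected, of order $6$, and one checks it is a $(1,1)$-graph (delete any vertex, get a $5$-vertex wheel-like graph — wait, order becomes odd, so rather one uses that $|G|-n$ must be even, forcing $|G|$ odd for $(1,1)$); thus the $(1,1)$ witness must have odd order $\ge n+2k+2=5$, e.g. a planar $4$-connected graph on $5$ or $7$ vertices such as $K_5$ minus an edge suitably planarized, or the $7$-vertex planar triangulation, while for $(2,1)$ one needs even order $\ge6$ and $4$-connectivity with the extendability-after-deletion property, for which the octahedron $K_{2,2,2}$ is the natural candidate: deleting any two vertices leaves a graph on four vertices that must be $1$-extendable, which is easy to verify case by case.

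The main obstacle I anticipate is the $(2,1)$ and $(1,1)$ cases: verifying directly that a small planar graph is an $(n,1)$-graph means checking, for every choice of $n$ deleted vertices, that the resulting graph is $1$-extendable, i.e. every edge lies in a perfect matching — this is a finite but fussy case analysis, and one must first exhibit a planar graph satisfying the connectivity lower bound $\kappa(G)\ge n+k+1$ from Theorem~\ref{thm:nk:basic}(ii), namely $\kappa\ge4$, together with the right parity. I would organize this by using Theorem~\ref{thm:nk:basic}(i) to reduce $(2,1)$-graph verification to $(0,2)$-graph (i.e. $2$-extendable) verification after deleting an adjacent pair, but that does not immediately help since $(2,1)$ does not arise from a $(0,2)$-graph; instead the cleanest route is to take a specific vertex-transitive planar graph (octahedron for $(2,1)$, and for $(1,1)$ a planar graph on $5$ vertices that is $1$-extendable after deleting any single vertex, such as the graph obtained from $K_4$ by adding one vertex adjacent to all four — the $4$-wheel $W_4$, which is planar, $3$-connected of order $5$) and verify the defining property by hand. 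The $(1,0),(3,0)$ cases require producing $5$-connected planar graphs of odd order, which is the other place a little care is needed, but standard planar triangulation constructions supply these.
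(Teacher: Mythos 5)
Your overall strategy (exhibit a planar witness for each pair; the lower bounds are trivial) is the right one and matches the paper's, but your concrete witnesses for the two pairs with $k=1$ are wrong, and your treatment of the odd factor-critical pairs is left resting on unproved constructions. For $(2,1)$ the octahedron $K_{2,2,2}$ is \emph{not} a $(2,1)$-graph: with parts $\{a_1,a_2\}$, $\{b_1,b_2\}$, $\{c_1,c_2\}$, delete $S=\{a_1,b_1\}$; in $G-S$ the edge $a_2b_2$ extends to a perfect matching only if $c_1c_2$ is an edge, which it is not, so $G-S$ is not $1$-extendable. (This is also predicted by Lemma~\ref{lem:nk:bip}: a $(2,1)$-graph on $6$ vertices cannot contain an induced bipartite subgraph on $4$ vertices, but $K_{2,2,2}$ minus a part is an induced $C_4$.) For $(1,1)$ your candidate is internally inconsistent ($K_4$ plus a vertex joined to all four is $K_5$, which is non-planar, not the wheel $W_4$), and $W_4$ itself fails: deleting a rim vertex $v_1$ leaves a graph in which the edge from the hub to the opposite rim vertex $v_3$ lies in no perfect matching, since $v_2$ and $v_4$ are then nonadjacent. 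Finally, for $(3,0)$ and $(1,0)$ you propose building $5$-connected planar graphs of odd order, but Lemma~\ref{thm:5conn:4fc} as cited applies only to even order, so you would still owe a proof that your odd-order graphs are $3$- and $1$-factor-critical.

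The missing idea, which is how the paper disposes of all six pairs at once, is to generate every witness from the single pair $(4,0)$ by two operations that preserve planarity: by Theorem~\ref{thm:nk:basic}(i) an $(n,k)$-graph is an $(n-2,k+1)$-graph, and deleting any one vertex from an $(n,k)$-graph with $n\ge1$ yields an $(n-1,k)$-graph. Starting from Zaks's $5$-connected planar graph of even order, which is a $(4,0)$-graph by Lemma~\ref{thm:5conn:4fc}, the first operation gives a planar $(2,1)$-graph, descent in $k$ (Theorem~\ref{thm:ext:basic}(i)) gives $(2,0)$, and one vertex deletion from each of these gives $(3,0)$, $(1,1)$, $(1,0)$. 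This removes every ad hoc small-graph verification from your argument. For the non-orientable statement your observation that a planar graph embeds in $N_1$ does give $\tg(n,k)\le1$, and $\tg\ge1$ is vacuous; the paper instead runs the same two reductions starting from $K_6$, which is equivalent. So the proposal is repairable, but as written the $(1,1)$ and $(2,1)$ cases are genuinely broken and the $(1,0)$, $(3,0)$ cases are incomplete.
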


\begin{proof}
Since the minimum genus of any orientable surface is~$0$,
and the minimum non-orientable genus of any non-orientable surface is~$1$,
it suffices to construct an $S_0$-embeddable $(n,k)$-graph,
and an $N_1$-embeddable $(n,k)$-graph for each pair~$(n,k)$ in~(\ref{pairs:nk1}).

Let~$G$ be a $5$-connected planar graph of even order.
The existence of such a graph can be seen from~\cite{Zaks76}.
By Lemma~\ref{thm:5conn:4fc}, we deduce that~$G$ is $4$-factor-critical,
namely a $(4,0)$-graph.
By Theorem~\ref{thm:nk:basic}~(i),
we derive that~$G$ is a $(2,1)$-graph,
and thus $2$-factor-critical. Thus
\begin{equation}\label{pf:g=0:1}
g(4,0)=g(2,1)=g(2,0)=0.
\end{equation}
We note that, when $n\ge1$, any graph obtained from an $(n,k)$-graph
by deleting a vertex is an $(n-1,k)$-graph. 
It is obvious that the operation of deleting an edge preserves the planarity of graphs.
Therefore, Result~(\ref{pf:g=0:1}) implies
\[
g(3,0)=g(1,1)=g(1,0)=0.
\]

For the non-orientable genera, it is clear that the complete graph~$K_6$ is $4$-factor-critical. 
On the other hand, 
$K_6$ has non-orientable genus~$1$ by Theorem~\ref{thm:g:K}.
For the same reasons as before, we deduce that 
$\tg(4,0)=\tg(2,1)=\tg(2,0)=1$,
and thus $\tg(3,0)=\tg(1,1)=\tg(1,0)=1$ since 
deleting an edge preserves the $N_1$-embeddability of graphs.
This completes the proof.
\end{proof}

Dealing with the remaining $3$ pairs $(n,k)$ for the non-orientable genera, 
we need the existence of an $N_2$-embeddable $5$-factor-critical graph,
which was proved constructively by Plummer and Zha~\cite{PZ04}. 
The next theorem solve the last case for Problem~\ref{prb:nk2}.

\begin{thm}\label{thm:g=1}
We have $g(n,k)=1$ and $\tg(n,k)=2$ for each of the following pairs $(n,k)$:
\begin{equation}\label{pairs:nk2}
(5,0),\ (3,1),\ (1,2).
\end{equation}
\end{thm}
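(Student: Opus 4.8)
The plan is to handle the orientable and non-orientable genus claims separately, and within each to prove the upper bound by exhibiting an embedding and the lower bound via the Euler-contribution machinery of Lemma~\ref{lem:ctrl} together with the degree estimate of Lemma~\ref{lem:d:x}. First I would observe that the three pairs in~(\ref{pairs:nk2}) are linked by Theorem~\ref{thm:nk:basic}~(i): a $(5,0)$-graph is a $(3,1)$-graph and a $(1,2)$-graph, so it suffices to produce one graph that is a $(5,0)$-graph (i.e.\ $5$-factor-critical) and realizes the claimed genus, and then to prove the matching lower bound for whichever of the three it is hardest for. Since deleting a vertex from an $(n,k)$-graph yields an $(n-1,k)$-graph and deleting an edge preserves embeddability, the upper bounds $g\le1$, $\tg\le2$ for all three pairs follow once we have a single $5$-factor-critical graph of orientable genus~$1$ (for the torus statement) and a single $5$-factor-critical graph of non-orientable genus~$2$ (for the Klein-bottle statement).

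For the upper bounds concretely: the complete graph $K_7$ is $5$-factor-critical, and by Theorem~\ref{thm:g:K} it has orientable genus $\lceil 4\cdot3/12\rceil=1$ and non-orientable genus~$3$; this gives $g(5,0)\le1$ immediately, but it is useless for the non-orientable claim $\tg(5,0)\le2$. Here I would invoke the result of Plummer and Zha~\cite{PZ04} cited just before the statement: there exists an $N_2$-embeddable $5$-factor-critical graph, which gives $\tg(5,0)\le2$, and hence $\tg(3,1)\le2$ and $\tg(1,2)\le2$ by the deletion arguments. For the orientable side I would likewise need a $5$-factor-critical graph of genus exactly~$1$ — $K_7$ works — so $g(5,0)\le1$, whence $g(3,1)\le1$ and $g(1,2)\le1$.

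For the lower bounds I would argue that none of these graphs can be planar (resp.\ projective-planar). Take any $(n,k)$-graph $G$ with $(n,k)$ among the three pairs; by Theorem~\ref{thm:nk:basic}~(ii) we have $\delta(G)\ge\kappa(G)\ge n+k+1=6$. Suppose $G$ embeds in a surface $\Sigma$ with $\chi(\Sigma)\ge1$ (sphere or projective plane). Let $v$ be a control point contained in $x$ triangular faces; by Lemma~\ref{lem:ctrl}, $d(v)/6\le d(v)/4-x/12\le1-\chi(\Sigma)/|G|\le1$, forcing $d(v)\le6$, so $d(v)=6$ and every face at $v$ is a triangle, i.e.\ $x=6$. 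Then Lemma~\ref{lem:ctrl} gives $6/4-6/12=1\le1-\chi(\Sigma)/|G|$, so $\chi(\Sigma)\le0$, contradicting $\chi(\Sigma)\ge1$. Hence $g(G)\ge1$ and $\tg(G)\ge2$ (the non-orientable genus of a non-projective-planar graph being at least~$2$), which combined with the upper bounds yields $g(n,k)=1$ and $\tg(n,k)=2$ for all three pairs. I expect the main obstacle to be the non-orientable upper bound $\tg(5,0)\le2$: unlike the orientable case it cannot come from a complete graph, so the argument genuinely relies on the explicit $N_2$-embeddable $5$-factor-critical construction of~\cite{PZ04}, and I would make sure the deletion-of-a-vertex step is justified — namely that the Plummer--Zha graph has enough vertices that the inequalities in~(\ref{def:nk}) still hold for the derived $(3,1)$- and $(1,2)$-graphs after removing vertices.
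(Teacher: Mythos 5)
Your upper bounds are exactly the paper's: $K_7$ (genus $1$, $5$-factor-critical, hence a $(3,1)$- and $(1,2)$-graph by Theorem~\ref{thm:nk:basic}~(i)) for the orientable side, and the Plummer--Zha $N_2$-embeddable $5$-factor-critical graph for the non-orientable side. The lower bound, however, contains a genuine error. You assert that for any $(n,k)$ among the three pairs, $\delta(G)\ge\kappa(G)\ge n+k+1=6$. But $n+k+1$ equals $6$ only for $(5,0)$; it is $5$ for $(3,1)$ and $4$ for $(1,2)$, so the hypothesis $\delta(G)\ge6$ on which your whole Euler-contribution contradiction rests is unavailable for two of the three classes. (Also, Theorem~\ref{thm:nk:basic}~(ii) requires $k\ge1$, so for $(5,0)$ you would instead use the standard bound $\delta\ge n+1=6$ for $n$-factor-critical graphs.) The direction of the containments makes this fatal rather than cosmetic: Theorem~\ref{thm:nk:basic}~(i) gives $(5,0)\Rightarrow(3,1)\Rightarrow(1,2)$, so $(1,2)$-graphs form the \emph{largest} of the three classes, and the lower bound must be proved for that class. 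Ruling out planar $(5,0)$-graphs — which is all your degree-$6$ argument could deliver — does not rule out a planar $(1,2)$-graph that fails to be $5$-factor-critical.

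The paper's proof shows there is no $S_0$-embeddable (resp.\ $N_1$-embeddable) $(1,2)$-graph directly, and has to work harder precisely because only $\delta(G)\ge4$ is available there. A control point $v$ then satisfies $d(v)\in\{4,5\}$, and one cannot conclude anything from the first inequality of~(\ref{ineq:ctrl}) alone; instead, matching-extension arguments are used to bound the number $x$ of triangular faces at $v$: if $d(v)=5$ and $x\ge3$ one finds a $2$-matching in $N(v)$ whose extension would isolate $v$ after deleting the leftover neighbour, forcing $x\le2$; if $d(v)=4$ a similar argument shows $N(v)$ is independent, so $x=0$. In either case the second inequality of~(\ref{ineq:ctrl}) yields $y/4-x/12\ge1>1-\chi(\Sigma)/|G|$, the desired contradiction. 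You would need to supply an argument of this kind (or some other proof that no $(1,2)$-graph is planar or projective-planar) to close the gap.
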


\begin{proof}
Let $(n,k)$ be a pair in~(\ref{pairs:nk2}).

First, we show that $g(n,k)=1$.
By Theorem~\ref{thm:g:K}, the complete graph~$K_7$ has genus~$1$.
It is obvious that $K_7$ is a $(5,0)$-graph.
By Theorem~\ref{thm:nk:basic}~(i), the graph~$K_7$ is both a $(3,1)$-graph and a $(1,2)$-graph.
So it suffices to show that there is no $S_0$-embeddable $(1,2)$-graphs.

Suppose to the contrary that~$G$ is an $S_0$-embeddable $(1,2)$-graph, 
which is connected without loss of generality.
By Theorem~\ref{thm:nk:basic}~(ii), we have $\delta(G)\ge4$.
Let~$v$ be a control point in an embedding $G\to S_0$. Denote $y=d(v)$.
Let~$x$ be the number of triangular faces containing~$v$.
By Lemma~\ref{lem:ctrl}, we have $y\le 5$ since $\chi(S_0)=2>0$. Therefore $y\in\{4,5\}$.

Suppose that $y=5$.
If $x\ge 3$, then $N(v)$ has a $2$-matching, say,~$M$.
Let $u$ be the vertex constituting $N(v)-V(M)$.
Then the subgraph $G-u$ is $2$-extendable since~$G$ is a $(1,2)$-graph.
On the other hand, the $2$-matching~$M$ is not extendable in $G-u$
since the vertex~$v$ is isolated.
This contradiction implies that $x\le 2$.
Using the second inequality in~(\ref{ineq:ctrl}), we deduce that
\[
{13\over 12}={5\over 4}-{2\over 12}\le {y\over 4}-{x\over 12}\le 1-{\chi(S_0)\over |G|}
=1-{2\over |G|}<1,
\]
a contradiction.

Therefore, we have $y=4$.
Suppose that 
$N(v)=\{v_1,v_2,v_3,v_4\}$. 
Assume that $v_1v_2$ is an edge.
Since~$G$ is a $(1,2)$-graph, we have $\delta(G)\ge4$ by Theorem~\ref{thm:nk:basic} (ii). 
So there exists a vertex~$u$ in the subgraph $G-N(v)-v$ such that
$v_3u$ is an edge of~$G$. 
Now, the graph obtained by deleting the vertex~$v_4$ and the edges~$v_1v_2$ and~$v_3u$
does not have a perfect matching because the vertex~$v$ is isolated, a contradiction!
Since the edge~$v_1v_2$ is chosen arbitrarily, we conclude that the set~$N(v)$ is independent,
that is, $x=0$.
Now, the second inequality in~(\ref{ineq:ctrl}) gives
\[
1={y\over 4}={y\over 4}-{x\over 12}\le 1-{\chi(S_0)\over |G|}<1,
\]
a contradiction.

Now we come to show that $\tg(n,k)=2$.
Let~$G$ be an $N_2$-embeddable $(5,0)$-graph.
The existence of such a graph can be seen from~\cite{PZ04}.
By Theorem~\ref{thm:nk:basic}~(i), 
we see that $\tg(n,k)\le2$, and it suffices to show that there is no 
$N_1$-embeddable $(1,2)$-graphs. 
The remaining proof is similar to the one for orientable genera.
Suppose to the contrary that~$G$ is a connected $N_1$-embeddable $(1,2)$-graph.
Let~$v$ be a control point in an embedding $G\to N_1$.
Let~$x$ be the number of triangular faces containing~$v$.
Since $\chi(N_1)=1>0$, we have $d(v)\in\{4,5\}$.
If $d(v)=5$, then $x\le2$ and thus
\[
{13\over 12}\le 1-{\chi(N_1)\over |G|}
=1-{1\over |G|}<1,
\]
a contradiction. So $d(v)=4$, $x=0$, and thus
\[
1\le1-{\chi(N_1)\over |G|}<1,
\]
a contradiction.
This completes the proof.
\end{proof}

So far we have completely solved Problem~\ref{prb:nk2}.
Here is a summary of the solution.

\begin{thm}\label{thm:ans:nk2}
Let $n\ge1$ and $k\ge0$. Then we have
\begin{align}
g(n,k)&=\begin{cases}
0,&\text{if $n+2k\le 4$},\\
\lceil (n+2k-1)(n+2k-2)/12 \rceil,&\text{otherwise};
\end{cases}\label{expr:g}\\[5pt]
\tg(n,k)&=\begin{cases}
1,&\text{ if $n+2k\le 4$},\\
\lceil (n+2k-1)(n+2k-2)/6\rceil,&\text{ otherwise}.
\end{cases}\label{expr:tg}
\end{align}
\end{thm}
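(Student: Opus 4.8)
The plan is to treat Theorem~\ref{thm:ans:nk2} as a bookkeeping consequence of the four preceding results, sorting the pairs $(n,k)$ according to the single quantity $m=n+2k$. First I would observe that, since $n\ge1$ and $k\ge0$, every pair falls into exactly one of four mutually exclusive regimes: (a) $m\le4$; (b) $m=5$; (c) $m\ge6$ and $k=0$; (d) $m\ge6$ and $k\ge1$. A moment's inspection shows these are exhaustive, and that the nine sporadic pairs of~(\ref{9pairs}) are precisely those lying in regimes (a) and (b): the six pairs of~(\ref{pairs:nk1}) constitute regime (a), and the three pairs of~(\ref{pairs:nk2}) constitute regime (b).

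Next I would read off the value of $g(n,k)$ and $\tg(n,k)$ in each regime from the appropriate theorem. In regime (a), Theorem~\ref{thm:g=0:tg=1} gives $g(n,k)=0$ and $\tg(n,k)=1$, which is exactly the ``$n+2k\le4$'' branch of both displayed formulas. In regime (b), Theorem~\ref{thm:g=1} gives $g(n,k)=1$ and $\tg(n,k)=2$; here one checks the arithmetic $(m-1)(m-2)=4\cdot 3=12$, so the ``otherwise'' branch reads $\lceil 12/12\rceil=1$ and $\lceil 12/6\rceil=2$, in agreement. In regime (c), Theorem~\ref{thm:n>=6:k=0} gives $g(n,0)=\lceil(n-1)(n-2)/12\rceil$ and $\tg(n,0)=\lceil(n-1)(n-2)/6\rceil$; substituting $m=n$ identifies these with the ``otherwise'' branch. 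Finally, in regime (d), Theorem~\ref{thm:g} directly yields the ``otherwise'' branch for both $g$ and $\tg$.

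Since every $(n,k)$ with $n\ge1$ and $k\ge0$ lies in one of the four regimes, and each regime has been matched to the corresponding branch of the formula, the proof is complete. The only step that is not pure pattern-matching is the one-line arithmetic verification in regime (b) that the closed form specializes correctly at $m=5$; there is no genuine obstacle here, as all the substantive work was already carried out in Theorems~\ref{thm:n>=6:k=0}, \ref{thm:g}, \ref{thm:g=0:tg=1}, and~\ref{thm:g=1}, and the present statement merely unifies them into a single clean expression.
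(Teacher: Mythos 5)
Your proposal is correct and matches the paper's own treatment: the paper presents Theorem~\ref{thm:ans:nk2} as a summary of Theorems~\ref{thm:n>=6:k=0}, \ref{thm:g}, \ref{thm:g=0:tg=1}, and~\ref{thm:g=1}, which together cover exactly the four regimes you describe. Your case split by $m=n+2k$ and the arithmetic check at $m=5$ are the right (and only) verifications needed.
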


By Formula~(\ref{expr:g}), we can draw Table~\ref{tab:g} for $g(n,k)$ when~$n$ and~$k$ are small.

\begin{table}[htdp]
\caption{The value $g(n,k)$ (the minimum integer $g$
such that there exists an $S_g$-embeddable $(n,k)$-graph) 
for $1\le n\le 8$ and $0\le k\le 8$.}
\begin{center}
\begin{tabular}{|c|c|c|c|c|c|c|c|c|c|}
\hline
&0&1&2&3&4&5&6&7&8\\
\hline
1&0&0&1&3&5&8&11&16&20\\
\hline
2&0&0&2&4&6&10&13&18&23\\
\hline
3&0&1&3&5&8&11&16&20&26\\
\hline
4&0&2&4&6&10&13&18&23&29\\
\hline
5&1&3&5&8&11&16&20&26&32\\
\hline
6&2&4&6&10&13&18&23&29&35\\
\hline
7&3&5&8&11&16&20&26&32&39\\
\hline
8&4&6&10&13&18&23&29&35&43\\
\hline
\end{tabular}
\end{center}
\label{tab:g}
\end{table}

from the definition and Table~\ref{tab:g}, one may obtain Table~\ref{tab:mu} 
for the value $\mu(n,\Sigma)$ when the surface~$\Sigma$ is orientable. 
For example, we have $\mu(6,S_8)=3$ since
\[
g(6,2)=6<8<10=g(6,3).
\]

\begin{table}[htdp]
\caption
{The value $\mu(n,S_g)$ (the minimum~$k$ such that there is no $S_g$-embeddable $(n,k)$-graphs) 
for $1\le n\le 8$ and $0\le g\le 16$.}
\begin{center}
\begin{tabular}{|c|c|c|c|c|c|c|c|c|c|c|c|c|c|c|c|c|c|}
\hline
&0&1&2&3&4&5&6&7&8&9&10&11&12&13&14&15&16\\
\hline
1&2&3&3&4&4&5&5&5&6&6&6&7&7&7&7&7&8\\
\hline
2&2&2&3&3&4&4&5&5&5&5&6&6&6&7&7&7&7\\
\hline
3&1&2&2&3&3&4&4&4&5&5&5&6&6&6&6&6&7\\
\hline
4&1&1&2&2&3&3&4&4&4&4&5&5&5&6&6&6&6\\
\hline
5&0&1&1&2&2&3&3&3&4&4&4&5&5&5&5&5&6\\
\hline
6&0&0&1&1&2&2&3&3&3&3&4&4&4&5&5&5&5\\
\hline
7&0&0&0&1&1&2&2&2&3&3&3&4&4&4&4&4&5\\
\hline
8&0&0&0&0&1&1&2&2&2&2&3&3&3&4&4&4&4\\
\hline
\end{tabular}
\end{center}
\label{tab:mu}
\end{table}

In general, we can deduce a formula for the number~$\mu(n,\Sigma)$ by Theorem~\ref{thm:ans:nk2}.
As will be seen, the following lemma turns out to be considerably useful 
in solving inequalities involving the ceiling function.

\begin{lem}\label{lem:ineq:ceil}
Let $g$ be an integer. Let $x$ and~$y$ be real numbers. Then we have 
\begin{itemize}
\item[(i)]
$\lceil x\rceil\le g$ if and only if $x\le g$;
\vskip 2pt
\item[(ii)]
$g\le \lceil y\rceil -1$ if and only if $g<y$.
\end{itemize}
\end{lem}

It is elementary to show
Lemma~\ref{lem:ineq:ceil} and we omit its proof.
Here is the answer to the dual problem of Problem~\ref{prb:nk2}.

\begin{thm}
Let $n,\tg\ge1$ and $g\ge0$. Then we have
\begin{align}
\mu(n,S_g)&=\begin{cases}
\max\bigl(0,\,3-\lceil n/2\rceil\bigr),&\text{if $g=0$},\\[3pt]
\max\bigl(0,\,\lfloor (\,7-2n+\sqrt{1+48g}\,)/4\rfloor\bigr),&\text{if $g\ge1$};
\end{cases}\label{fm:mu:g}\\[4pt]
\tilde\mu(n,N_{\tg})
&=\max\Biggl(\,0,\ \biggl\lfloor{7-2n+\sqrt{1+24\tg}\over4}\biggr\rfloor\,\Biggr).\label{fm:mu:tg}
\end{align}
\end{thm}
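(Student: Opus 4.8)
The plan is to deduce $\mu(n,S_g)$ and $\tilde\mu(n,N_{\tg})$ from the values of $g(n,k)$ and $\tg(n,k)$ provided by Theorem~\ref{thm:ans:nk2}, by means of a simple duality followed by the inversion of a quadratic. First I would record the duality: for all $n\ge1$, $k\ge0$ and $g\ge0$ there exists an $S_g$-embeddable $(n,k)$-graph if and only if $g(n,k)\le g$. Indeed $K_{n+2k+2}$ is an $(n,k)$-graph, so some $(n,k)$-graph embeds in $S_{g(n,k)}$, hence in every $S_g$ with $g\ge g(n,k)$, while the converse is just the definition of $g(n,k)$; moreover $g(n,k)$ is finite and tends to infinity with $k$, so $\mu(n,S_g)$ is well defined. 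Consequently
\[
\mu(n,S_g)=\min\{k\ge0:\ g(n,k)\ge g+1\},
\]
and similarly $\tilde\mu(n,N_{\tg})=\min\{k\ge0:\ \tg(n,k)\ge\tg+1\}$, where $\tg\ge1$ throughout. By Theorem~\ref{thm:ext:basic}(i) every $(n,k)$-graph is an $(n,k-1)$-graph, so $g(n,k)$ and $\tg(n,k)$ are non-decreasing in $k$; this makes the above minima genuine thresholds, which is also apparent from the closed forms obtained below.

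Next, fix $g\ge1$ and strip away the ceiling and the sporadic clause of Theorem~\ref{thm:ans:nk2}. By~(\ref{expr:g}) and Lemma~\ref{lem:ineq:ceil}(i), the relation $g(n,k)\le g$ holds if and only if $n+2k\le4$ or $(n+2k-1)(n+2k-2)\le12g$; and since $(m-1)(m-2)\le6\le12g$ for every $m\in\{1,2,3,4\}$, the first alternative is subsumed by the second, so in fact $g(n,k)\le g\iff(n+2k-1)(n+2k-2)\le12g$. Therefore
\[
\mu(n,S_g)=\min\bigl\{k\ge0:\ (n+2k-1)(n+2k-2)>12g\bigr\},
\]
and the same reasoning applied to~(\ref{expr:tg}) (using $\tg\ge1$, so again $(m-1)(m-2)\le6\le6\tg$) gives $\tilde\mu(n,N_{\tg})=\min\{k\ge0:\ (n+2k-1)(n+2k-2)>6\tg\}$.

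It then remains to solve the quadratic inequality. With $p=n+2k$, the condition $(p-1)(p-2)>12g$ reads $p^2-3p+2-12g>0$, that is $p>(3+\sqrt{1+48g}\,)/2$ (the other root being negative), that is $k>(3-2n+\sqrt{1+48g}\,)/4$. Since $k\mapsto(n+2k-1)(n+2k-2)$ is strictly increasing on $k\ge0$ when $n\ge1$, the admissible $k$ form an up-set, so the minimum over $k\ge0$ equals $\max\bigl(0,\ \lfloor(3-2n+\sqrt{1+48g}\,)/4\rfloor+1\bigr)=\max\bigl(0,\ \lfloor(7-2n+\sqrt{1+48g}\,)/4\rfloor\bigr)$, which is~(\ref{fm:mu:g}) for $g\ge1$; replacing $48g$ by $24\tg$ throughout yields~(\ref{fm:mu:tg}). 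In the remaining case $g=0$ the quadratic clause is weaker than the sporadic one, so I would argue directly from~(\ref{expr:g}): here $g(n,k)\ge1$ precisely when $n+2k\ge5$, hence $\mu(n,S_0)=\min\{k\ge0:\ n+2k\ge5\}=\max\bigl(0,\lceil(5-n)/2\rceil\bigr)$, and the elementary parity identity $\lceil(5-n)/2\rceil=3-\lceil n/2\rceil$ completes the proof.

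I do not anticipate a genuine obstacle; the argument is essentially bookkeeping. The two points that need care are the interplay of the ``$n+2k\le4$'' clause with the quadratic clause of Theorem~\ref{thm:ans:nk2} --- inert once $g\ge1$ or $\tg\ge1$, but dominant when $g=0$, which is exactly why the planar case must be stated separately --- and the identity that the least integer strictly greater than a real number $x$ is $\lfloor x\rfloor+1=\lfloor x+1\rfloor$, which is what allows the ``$+1$'' to be absorbed into the floor in the final formula.
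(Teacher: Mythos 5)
Your proposal is correct, and it reaches the formulas by a genuinely more direct route than the paper. The paper first observes (from Tables~\ref{tab:g} and~\ref{tab:mu}) the recursion $\mu(n,S_g)=\max\bigl(0,\mu(n-2,S_g)-1\bigr)$ for $n\ge3$, reduces to the two base rows $n=1,2$, determines the thresholds $a_i=\max\{g:\mu(1,g)=i\}$ by telescoping the differences $g(1,i)-g(1,i-1)$, and only then applies Lemma~\ref{lem:ineq:ceil} to a two-sided ceiling inequality and inverts the quadratic; the non-orientable case is handled by repeating the table analysis. You instead write $\mu(n,S_g)=\min\{k\ge0:\ g(n,k)\ge g+1\}$ directly from the definitions, strip the ceiling in~(\ref{expr:g}) with Lemma~\ref{lem:ineq:ceil}(i), note that the sporadic clause $n+2k\le4$ is absorbed once $g\ge1$ (resp.\ $\tg\ge1$), and solve a single quadratic inequality in $p=n+2k$, uniformly in $n$ and in both orientability cases at once. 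What your route buys is brevity and rigor: it avoids the informal ``observe two rules from the tables'' step (whose general validity the paper only sketches) and the separate treatment of $n=1$ and $n=2$; what the paper's route buys is the structurally illuminating recursion in $n$ and an explicit picture of where each value of $k$ first fails. Your handling of the edge cases is sound: the threshold $\lfloor x\rfloor+1=\lfloor x+1\rfloor$ identity, the $\max(0,\cdot)$ truncation when the threshold is nonpositive, and the separate planar computation $\mu(n,S_0)=\max\bigl(0,\lceil(5-n)/2\rceil\bigr)=\max\bigl(0,3-\lceil n/2\rceil\bigr)$ are all correct. The only cosmetic slip is the citation of Theorem~\ref{thm:ext:basic}(i) for the monotonicity of $g(n,k)$ in $k$; the fact that every $(n,k)$-graph is an $(n,k-1)$-graph does follow from it immediately (and in any case your argument does not actually need this monotonicity, since the admissible set is shown to be an up-set directly from the quadratic).
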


\begin{proof}
The proofs for Formula~(\ref{fm:mu:g}) and Formula~(\ref{fm:mu:tg}) are highly similar. 
We will show Formula~(\ref{fm:mu:g}) only.

It is easy to observe two rules from Table~\ref{tab:g} and Table~\ref{tab:mu}.
First, in Table~\ref{tab:g}, 
the $n$th row is obtained by 
translating the $(n-2)$th row by one unit to the left for all $n\ge3$.
This is true in general, because of the bi-linear expression in the ceiling function in~(\ref{expr:g}).
Second, the number of entries~$i$ in the $n$th row of Table~\ref{tab:mu}
equals the difference between the $(n,i)$-entry and $(n,i-1)$-entry in Table~\ref{tab:g}.
This can be proved in general easily from the definitions of the numbers~$g(n,k)$ and~$\mu(n,S_g)$.

Consequently, we can deduce that in Table~\ref{tab:mu}, 
the $n$th row is obtained by subtracting~$1$ 
from the $(n-2)$th row, and by forcing it to be~$0$ if it is negative.
In other words, we have
\begin{equation}\label{rec:mu}
\mu(n,S_g)=\max\bigl(0,\,\mu(n-2,S_g)-1\bigr)\qquad\text{for all $n\ge3$.}
\end{equation}
So it suffices to find out the formulas for the numbers~$\mu(1,S_g)$ and~$\mu(2,S_g)$. 
Now we seek them respectively.

For any $i\ge2$, 
let $a_i$ be the right-most column label~$g$ 
such that the $(1,g)$-entry in Table~\ref{tab:mu} is~$i$. In other words,
\[
a_i=\max\{g\,\colon\,\mu(1,g)=i\}.
\]
For example, $a_2=0$ can be read from Table~\ref{tab:mu} directly.
From the above description of the way of constructing Table~\ref{tab:mu} 
from Table~\ref{tab:g}, we know that the number~$a_i$ exists for all $i\ge2$. 
Moreover, the number of entries of value~$i$ equals the difference $a_i-a_{i-1}$.
On the other hand, we have observed that this number should be the difference 
$g(1,i)-g(1,i-1)$. Therefore, we have 
\[
a_i-a_{i-1}=g(1,i)-g(1,i-1)\qquad \text{for all $i\ge3$}.
\]
By~(\ref{expr:g}), adding up these equalities gives
\[
a_k
=a_2+\sum_{i=3}^k(a_i-a_{i-1})
=\sum_{i=3}^k\bigl(g(1,i)-g(1,i-1)\bigr)
=g(1,k)-g(1,2)
=\biggl\lceil {k(2k-1)\over6}\biggr\rceil-1,
\]
for all $k\ge2$.
from the definition, we have $\mu(1,S_g)=k$ if and only if $a_{k-1}+1\le g\le a_k$, that is,
\begin{equation}\label{pf:mu:g:1}
\biggl\lceil {(k-1)(2k-3)\over6}\biggr\rceil
\le g
\le \biggl\lceil {k(2k-1)\over6}\biggr\rceil-1.
\end{equation}
Note that $k\ge2$ implies $g\ge1$ from~(\ref{pf:mu:g:1}). 
By Lemma~\ref{lem:ineq:ceil}, (\ref{pf:mu:g:1}) is equivalent to 
\[
{(k-1)(2k-3)\over6}
\le g
<{k(2k-1)\over6}.
\]
Solving $k$ in terms of $g$, we get
\[
{1+\sqrt{1+48g}\over 4}<k\le {5+\sqrt{1+48g}\over 4}.
\]
Since the above two bounds of $k$ form an interval of length~$1$, 
and since $k$ is an integer, we infer that 
\begin{equation}\label{sol:mu:n=1}
\mu(1,S_g)=k=\biggl\lfloor {5+\sqrt{1+48g}\over 4}\biggr\rfloor.
\end{equation}

Along the same line, we compute
\begin{equation}\label{sol:mu:n=2}
\mu(2,S_g)=k=\biggl\lfloor {3+\sqrt{1+48g}\over 4}\biggr\rfloor.
\end{equation}
Combining Eqs.~(\ref{rec:mu}), (\ref{sol:mu:n=1}), and~(\ref{sol:mu:n=2}),
we find Formula~(\ref{fm:mu:g}) for the case $g\ge1$. 
The expression for $g=0$ can be checked straightforwardly from Table~\ref{tab:mu}.
This proves Formula~(\ref{fm:mu:g}).

Similarly, one may draw Table~\ref{tab:tg} by~(\ref{expr:tg}).
\begin{table}[htdp]
\caption{The value $\tg(n,k)$ (the minimum integer $\tg$
such that there exists an $N_{\tg}$-embeddable $(n,k)$-graph) for $1\le n\le 8$ and $0\le k\le 8$.}
\begin{center}
\begin{tabular}{|c|c|c|c|c|c|c|c|c|c|}
\hline
&0&1&2&3&4&5&6&7&8\\
\hline
1&1&1&2&5&10&15&22&31&40\\
\hline
2&1&1&4&7&12&19&26&35&46\\
\hline
3&1&2&5&10&15&22&31&40&51\\
\hline
4&1&4&7&12&19&26&35&46&57\\
\hline
5&2&5&10&15&22&31&40&51&64\\
\hline
6&4&7&12&19&26&35&46&57&70\\
\hline
7&5&10&15&22&31&40&51&64&77\\
\hline
8&7&12&19&26&35&46&57&70&85\\
\hline
\end{tabular}
\end{center}
\label{tab:tg}
\end{table}

Again, the first two rows determine the whole table
in the way that 
\[
\mu(n,N_{\tg})=\max\bigl(0,\,\mu(n-2,N_{\tg})-1\bigr).
\]
We obtain Table~\ref{tab:tmu} from Table~\ref{tab:tg}.

\begin{table}[htdp]
\caption
{The value $\mu(n,N_{\tg})$ 
(the minimum $k$ such that there is no $N_{\tg}$-embeddable $(n,k)$-graphs) 
for $1\le n\le 8$ and $1\le \tg\le 16$.}
\begin{center}
\begin{tabular}{|c|c|c|c|c|c|c|c|c|c|c|c|c|c|c|c|c|c|}
\hline
&1&2&3&4&5&6&7&8&9&10&11&12&13&14&15&16\\
\hline
1&2&3&3&3&4&4&4&4&4&5&5&5&5&5&6&6\\
\hline
2&2&2&2&3&3&3&4&4&4&4&4&5&5&5&5&5\\
\hline
3&1&2&2&2&3&3&3&3&3&4&4&4&4&4&5&5\\
\hline
4&1&1&1&2&2&2&3&3&3&3&3&4&4&4&4&4\\
\hline
5&0&1&1&1&2&2&2&2&2&3&3&3&3&3&4&4\\
\hline
6&0&0&0&1&1&1&2&2&2&2&2&3&3&3&3&3\\
\hline
7&0&0&0&0&1&1&1&1&1&2&2&2&2&2&3&3\\
\hline
8&0&0&0&0&0&0&1&1&1&1&1&2&2&2&2&2\\
\hline
\end{tabular}
\end{center}
\label{tab:tmu}
\end{table}
Along the same way, we obtain the general formula for the number~$\tilde\mu(n,N_{\tg})$.
This completes the proof.
\end{proof}

In terms of Euler characteristic, the above answer can be stated as Theorem~\ref{thm:nk} does.

\section*{Appendix: a proof of Theorem~\ref{thm:fc}}

One may rediscover Formula~(\ref{ans:fc}) from Theorem~\ref{thm:ans:nk2}.
The result $\rho(S_0)=5$ can be read directly from the column $k=0$ of Table~\ref{tab:g}.
Let $g\ge 1$ below.
From the definitions of $\rho(\Sigma)$ and $g(n,k)$, we see that $\rho(S_g)=n$ if and only if
\[
g(n-1,0)\le g\le g(n,0)-1,
\]
that is, if and only if
\[
\biggl\lceil{(n-2)(n-3)\over 12}\biggr\rceil
\le g\le
\biggl\lceil{(n-1)(n-2)\over 12}\biggr\rceil-1.
\]
By Lemma~\ref{lem:ineq:ceil}, the above inequalities are equivalent respectively to 
\[
{5-\sqrt{48g+1}\over 2}
\le n\le
{5+\sqrt{48g+1}\over 2}
\]
and
\[
n<{3-\sqrt{48g+1}\over 2}
\qquad\text{or}\qquad
n>{3+\sqrt{48g+1}\over 2}.
\]
It follows that 
\[
\rho(S_g)=n
=\biggl\lfloor{5+\sqrt{48g+1}\over 2}\biggl\rfloor
=\biggl\lfloor{5+\sqrt{49-24\chi}\over 2}\biggr\rfloor.
\]
Along the same line, we can show that 
\[
\rho(N_{\tg})
=\biggl\lfloor{5+\sqrt{24\tg+1}\over 2}\biggl\rfloor
=\biggl\lfloor{5+\sqrt{49-24\chi}\over 2}\biggr\rfloor.
\]
This proves Theorem~\ref{thm:fc}.

\end{document}